\newtheorem*{mainthm}{Theorem} 
\newtheorem{lem}{Lemma}[section]
\newtheorem{cor}[lem]{Corollary}
\newtheorem{defi}[lem]{Definition}
\newtheorem{thm}[lem]{Theorem}
\newtheorem{prop}[lem]{Proposition}
\newtheorem{prty}[lem]{Property}
\newtheorem{rmk}[lem]{Remark}
\theoremstyle{definition} 
\newtheorem{ex}[lem]{Example}
\newtheorem{rk}[lem]{Remark}
\newcommand{\re}{\mathbb{R}}
\newcommand{\ce}{\mathbb{C}}
\newcommand{\ee}{=}
\newcommand{\mm}{-}
\newcommand{\pp}{+}
\newcommand{\field}[1]{\mathbb{#1}}
\newcommand{\C}{\field{C}}
\newcommand{\K}{\field{K}}
\newcommand{\KP}{\field{KP}}
\newcommand{\R}{\field{R}}
\newcommand{\bbo}{{\bf 0}}
\newcommand{\bc}{{\bf c}}
\newcommand{\be}{{\bf e}}
\newcommand{\bt}{{\bf t}}
\newcommand{\bu}{{\bf u}}
\newcommand{\bv}{{\bf v}}
\newcommand{\bx}{{\bf x}}
\newcommand{\by}{{\bf y}}
\newcommand{\Cbar}{\overline{\C}}
\newcommand{\cC}{\mathcal{C}}
\newcommand{\Cn}{{\C^n}}
\newcommand{\Cp}{{\C^p}}
\newcommand{\cU}{\mathcal{U}}
\newcommand{\cV}{\mathcal{V}}
\newcommand{\clos}{{\rm clos}}
\newcommand{\dd}{{\partial}}
\newcommand{\dist}{{\rm dist}}
\newcommand{\dlt}{{\delta}}
\newcommand{\gm}{{\gamma}}
\newcommand{\Gm}{{\Gamma}}
\newcommand{\Kbar}{\overline{\K}}
\newcommand{\Km}{{\K^m}}
\newcommand{\Kn}{{\K^n}}
\newcommand{\Kp}{{\K^p}}
\newcommand{\KPn}{{\KP^n}}
\newcommand{\lbd}{{\lambda}}
\newcommand{\rd}{{\rm d}}
\newcommand{\Rbar}{\overline{\R}}
\newcommand{\Rm}{{\R^m}}
\newcommand{\Rn}{{\R^n}}
\newcommand{\Rp}{{\R^p}}
\newcommand{\ve}{\varepsilon}
\newcommand{\vp}{\varphi}
\newcommand{\Ltv}{{\rm L}}
\newcommand\normpreimage[1]{\left\lVert#1\right\rVert}
\newcommand\normproduct[1]{\left\lVert#1\right\rVert}
\newcommand{\Bif}{{\rm Bif}}
\title[Lipschitz trivial values]{Lipschitz trivial values of polynomial mappings}
\author[A. Costa]{Andr\'e Costa}
\author[V. Grandjean]{Vincent Grandjean}
\author[M. Michalska]{Maria Michalska}
\address{A. Costa, V. Grandjean, M. Michalska, Departamento de Matem\'atica, 
Universidade Federal do Cear\'a
(UFC), Campus do Pici, Bloco 914, Cep. 60455-760. Fortaleza-Ce,
Brasil}
\address{M. Michalska, Wydzia\l{} Matematyki i Informatyki, Uniwersytet 
\L{}\'o{}dzki, Banacha 22, 90-238 \L{}\'o{}d\'z{}, Poland}
\email{andrecosta.math@gmail.com - vgrandjean@mat.ufc.br -
maria.michalska@wmii.uni.lodz.pl}
\thanks{{V. Grandjean was partially supported by FUNCAP/CAPES/CNPq-Brazil 
grant 306119/2018-8}}
\subjclass[2000]{}
\keywords{Polynomial mapping, Lipschitz fibre bundle, proper mapping, 
bifurcation value}
\begin{document}

\maketitle

\begin{abstract}
We prove that a polynomial mapping $f:\Kn\mapsto\Kp$, where $\K=\R$ or $\C$,
attains a Lipschitz trivial value~$\bc$
if and only if there exist a polynomial 
mapping $g:\K^m\mapsto\Kp$, for which the value~$\bc$ is a regular value of 
properness, 
and a linear surjective projection $\pi:\Kn\to\K^m$ such that $f = g\circ \pi$. 
The integer $m$ is the $\K$-codimension of the accumulation set at infinity of
the level $f^{-1}(\bc)$ in the hyperplane at infinity. In the complex case, it
is equivalent to require the mapping $g$ be generically finite and dominant.
Last, we show this result cannot extend to rational mappings over $\Kn$.
\end{abstract}

\section*{Introduction}

Let $f:\Kn\to\Kp$ be a dominant polynomial mapping over $\K = \R$ or $\C$. 
There exists a smallest subset $\Bif_k(f)$, contained in an algebraic 
subset of $\Kp$ of positive codimension, with the following property: the 
mapping $f$ induces a locally trivial $\cC^k$ fibre bundle structure over each 
connected component of $\Kp\setminus \Bif_k(f)$. The notion of $\cC^\infty$ 
triviality is generally finer than that of $\cC^0$ triviality and over the 
last fifty years, following the seminal paper \cite{Tho}, a 
significant literature about 
the $\cC^0$ and the $\cC^\infty$ local triviality has been developed
(see for instance
\cite{Ver,Har,Rab,KOS,JeKu}).

This paper investigates an intermediate case: when and over which 
subset of values the mapping induces a locally 
bi-Lipschitz trivial fibre bundle structure. Our goal is to characterize 
polynomial mappings admitting \em Lipschitz trivial values, \em that is over
a neighbourhood of which there is a bi-Lipschitz trivialization, problem 
recently raised in~\cite{FGS}. Our main result 
Theorem~\ref{MainThm} when combined with Proposition \ref{prop:lip-triv-reg}
implies the following 
\begin{mainthm} 
Let $f:\Kn\to \Kp$ be a polynomial mapping and let $n \mm 1 \mm m$ be
the dimension of 
the set of accumulation points at infinity of the fibre $f^{-1}(\bc)$. 
The mapping $f$ attains 
the Lipschitz trivial value $\bc$
if and only if 
$$
f = g \circ \pi,
$$
for a linear surjective projection $\pi:\Kn\to\Km$ and a polynomial mapping 
$g :\Km\to\Kp$ for which $\bc$ is a regular value of properness.  
\end{mainthm}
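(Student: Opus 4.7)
The strategy is to deduce the stated equivalence from two independent results: Proposition \ref{prop:lip-triv-reg}, which guarantees that a regular value of properness of a polynomial mapping is Lipschitz trivial, and Theorem \ref{MainThm}, which provides the geometric characterization of Lipschitz trivial values of $f$ in terms of the asymptotic behaviour of its fibre. The two implications of the claim will follow respectively from these two results.

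For the ``if'' direction, assume $f = g \circ \pi$ with $g : \Km \to \Kp$ having $\bc$ as a regular value of properness. By Proposition \ref{prop:lip-triv-reg}, $g$ is bi-Lipschitz trivial over some open neighbourhood $U$ of $\bc$. Using a linear splitting $\Kn \cong \Km \oplus \ker \pi$ in which $\pi$ is the first projection, one has $f^{-1}(U) = g^{-1}(U) \times \ker \pi$, and the trivialization of $g$ extends to a bi-Lipschitz trivialization of $f$ over $U$ by acting as the identity on the $\ker\pi$-factor. Thus $\bc$ is a Lipschitz trivial value of $f$.

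The ``only if'' direction is the substantive content, encapsulated in Theorem \ref{MainThm}. Assume $\Phi : U \times f^{-1}(\bc) \to f^{-1}(U)$ is a bi-Lipschitz trivialization of $f$ at $\bc$. The joint bi-Lipschitz property readily implies an estimate of the form $|\Phi(\bt,x) - x| \le L |\bt - \bc|$, so the accumulation set at infinity $Z_\infty \subset H_\infty$ of $f^{-1}(\bt)$ is independent of $\bt \in U$, with uniform Hausdorff control on fibres near $\bc$. The main obstacle I anticipate is showing that $Z_\infty$ is actually a linear projective subspace of $H_\infty$: one should combine the Lipschitz asymptotic bound with the polynomial structure of $f$ to deduce, on sequences escaping to infinity inside $Z_\infty$, that $f$ is asymptotically affine in the relevant directions, then use polynomial rigidity to extract a linear structure of the expected dimension $n - 1 - m$.

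Once a linear subspace $V \subset \Kn$ of dimension $n - m$ has been identified with $Z_\infty$ as its projectivization, set $\pi : \Kn \to \Kn/V \cong \Km$ to be the quotient linear surjection. Upgrading asymptotic $V$-invariance of $f$ to genuine translation invariance under $V$ by the same polynomial rigidity yields the factorization $f = g \circ \pi$ for some polynomial $g : \Km \to \Kp$. Finally, $g^{-1}(\bc) = \pi(f^{-1}(\bc))$ has no accumulation at infinity, since every accumulation direction of $f^{-1}(\bc)$ lies in $Z_\infty$ and is collapsed by $\pi$; hence $g^{-1}(\bc)$ is a bounded algebraic set and $g$ is proper on a neighbourhood of $\bc$. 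Regularity of $\bc$ for $g$ follows, via the chain rule $df = dg \circ d\pi$, from the regularity of $\bc$ for $f$ inherited from Lipschitz triviality, completing the proof.
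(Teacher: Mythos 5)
Your global architecture is the same as the paper's: the ``if'' direction via (regular value of properness $\Rightarrow$ Lipschitz trivial for $g$, by Ehresmann plus compactness) followed by suspension along $\ker\pi$, and the ``only if'' direction via a factorization theorem plus regularity of Lipschitz trivial values. Two remarks on the easy direction: the result you invoke is not Proposition~\ref{prop:lip-triv-reg} (which goes the other way, Lipschitz trivial $\Rightarrow$ regular) but Property~\ref{prop:properness-lipschit}, and your product trivialization is exactly Property~\ref{prtySuspension}. The genuine gap is in the ``only if'' direction, which is the heart of the theorem: the passage from the Lipschitz/tube control near $F_\bc$ to actual translation invariance of $f$ along an $(n-m)$-dimensional subspace is left as ``asymptotically affine \dots then use polynomial rigidity'', with no mechanism supplied. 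The paper's mechanism (Lemma~\ref{lemNotDepend}) is concrete: for a point $[\bv:0]\in F_\bc^\infty$, parametrize an arc $\gm(t)=(t^d,p(t)+A_0(1/t))$ in $F_\bc$ tending to it, perturb to the polynomial family $\gm_\ve(t)=(t^d,p(t)+\ve)$; the inclusion $f^{-1}(\cV)\supset T_{\delta}(F_\bc)$ keeps $\gm_\ve$ inside $f^{-1}(\cV)$ for small $\ve$ and large $|t|$, the Lipschitz bound makes $t\mapsto f(\gm_\ve(t))$ bounded, hence constant because it is polynomial, and differentiating together with boundedness of the first partials forces $\dd_{x_1}f\circ\gm_\ve\equiv 0$; since $(t,\ve)\mapsto\gm_\ve(t)$ is dominant, $\dd_{x_1}f\equiv 0$. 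Without an argument of this type your key step is an assertion, not a proof.

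Two further points. First, you do not need, and should not try to prove first, that the accumulation set $Z_\infty$ is a projective linear subspace: from $\dim F_\bc^\infty=n-1-m$ the affine cone over it has dimension $n-m$, hence contains $n-m$ linearly independent directions, and applying the directional vanishing statement to each of them already gives $f=g\circ\pi$; linearity of the cone is a consequence (Property~\ref{prtyConeAtInfty}), not an ingredient, so your route is both harder and unnecessary. Second, your conclusion ``$g^{-1}(\bc)$ is bounded, hence $g$ is proper on a neighbourhood of $\bc$'' is a non sequitur for polynomial mappings: compact (even empty) fibres occur over non-proper values, e.g. $(x,y)\mapsto(x,xy)$ at the value $(0,1)$. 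What is needed, and what the paper uses, is the uniform tube control $f^{-1}(\cV)\subset T_{\epsilon}(F_\bc)$ from Proposition~\ref{lemfLipschitz}(ii), which yields that $g^{-1}(\cV)$ itself is bounded and hence that $\bc$ is a value of properness of $g$; you mention the uniform control earlier but do not use it where it is actually required.
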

In the complex case, the statement equivalently requires that $m=p$ and
the polynomial mapping $g~:~\Cp~\to~\Cp$ be dominant and generically finite (see 
Corollary~\ref{corComplexLtv}). Therefore, either almost all values of a 
complex polynomial mapping are Lipschitz trivial or there are none. 
In contrast with the complex case, there exist non-proper real polynomial
mappings admitting values of properness. 

The main result completely describes the real and complex polynomial mappings 
admitting Lipschitz trivial values, recovering the case of complex 
polynomial functions of \cite{FGS}. Our proof is very different, more concise, 
and covers both real and complex cases. Moreover, we show that the 
main theorem cannot extend without further hypotheses to a wider class of
rational mappings (even those which are regulous, see~\cite{4guys}). 

The article is organized as follows. 
Section~\ref{secGeneralLipPrties} presents properties of Lipschitz trivial values 
of differentiable mappings. The main result is proved in Section~\ref{secMain} 
while Section~\ref{secLipRealComplex} describes the set of Lipschitz trivial 
values in the real and complex cases. Section~\ref{SecComplexification} deals with 
the relation 
between Lipschitz trivial values of a real mapping and that of its 
complexification. In  Section~\ref{secRational} we show that the main result 
cannot extend to rational mappings.

\section{Preliminaries}

Throughout the paper $\K \ee \ce$ or $\re$.
We use the conventions that $\dim \emptyset \ee \mm 1$ and empty mappings are 
proper and continuous. 
\\
We  embed the affine space~$\Kn$  in  $\KPn$ as $\bx \mapsto [\bx:1]$. The 
hyperplane at infinity $H_\infty := \KPn\setminus\Kn$ consists of the points 
of $\KPn$ of the form $[\bv:0]$.

\begin{defi}
The \em accumulation set at infinity $X^{\infty}$ of a subset $X$ of $\Kn$ \em is 
defined as
$$
X^{\infty}:\ee \overline{X}^{\KPn}\cap H_{\infty},
$$
where $ \overline{X}^{\KPn}$ is the closure
of $X$ taken in $\KPn$.
\end{defi}

The following notions lie at the heart of our problem.

\begin{defi}
A mapping $\vp:\Kn\to\Kp$ is proper at the value $\bc\in\Kp$, if there exists a 
neighbourhood $\cV$~of~$\bc$ in $\Kp$ such that the restriction mapping 
of $\vp$ to $\vp^{-1}(\cV)$ is proper.  
Let us denote by $J(\vp)$ the  \em Jelonek set \em of values at which the mapping $\vp$ 
is not proper.
\end{defi}

The Jelonek set of a real or complex polynomial mappings is always contained in
an algebraic set of dimension at most $n\mm 1$, see~\cite{Je99,JeKu}.

\begin{defi}\label{defTOPtriv}
A mapping $\vp:\Kn\to\Kp$ \em is  topologically trivial at the value
$\bc \in\Kp$, \em  if there exist a neighbourhood $\cV$ of $\bc$ in $\Kp$ and a 
trivializing homeomorphism 
\begin{equation}\label{eqnfibration}
H: \vp^{-1}(\bc) \times \cV \to \vp^{-1}(\cV)
\end{equation}
which  satisfies $(\vp \circ H) (\bx,\bt) = \bt$.

When $H$ is a $\mathcal{C}^\infty$ diffeomorphism, the mapping $\vp$ is called 
\em $\cC^\infty$ trivial at the value $\bc$.  \em The complement of the set of 
values at which the mapping $\vp$ is  $\cC^\infty$ trivial is 
the set of bifurcation values $\Bif(\vp)$, see~\cite{Tho}. 
\end{defi}

\begin{rk}\label{rk:empty}
In particular, the mapping $\vp$ is locally trivial at any value
of the open subset $\Kp\setminus\clos(Im(\vp))$, the complement of 
the closure of the image of $\vp$. 
\end{rk}

\begin{ex}
Note that the real polynomial function $x \mapsto x^{2021}$ is topologically 
trivial at each $c\in\R$, but not $\cC^\infty$ trivial at $0$.
\end{ex}

Let $K_0(\vp) :\ee \vp (\text{crit} (\vp))$ be the set of critical values of 
$\vp$. The mapping $\vp$ is said to be regular at $\bc$ once $\bc\notin K_0(\vp)$. 
When a polynomial mapping $\vp$ is proper and regular at a value $\bc$, 
the mapping~$\vp$ is $\cC^\infty$ trivial at $\bc$ by Sard's and Ehresmann's 
Theorems. Thus the set $\Bif(\vp)\setminus K_0(\vp)$ contains only non-proper 
values when it is non-empty. Moreover,  by~\cite{JeKu}, when $\vp$ is 
polynomial and $n\ee p$ we get 
$\Bif (\vp) \ee K_0(\vp)\cup J(\vp)$.

\section{Lipschitz trivial values}\label{secGeneralLipPrties}

In  this section we present general properties of differentiable mappings with 
Lipschitz trivial values. 

Let again $\vp:\Kn\to\Kp$ be a mapping.

Any subset $X$ of $\Kn$ inherits a metric structure from restricting the ambient 
euclidean distance to $X$, this metric is called \em the outer metric 
on $X$. \em  On a product $X\times Y$ of
metric spaces we will consider the product metric.

\begin{defi}\label{defLipschitztriv}
A \em Lipschitz trivial value $\bc \in \Kp$ \em of the mapping $\vp$ is a value
such that there exists a trivializing homeomorphism as in~\eqref{eqnfibration} 
of Definition~\ref{defTOPtriv} which is bi-Lipschitz with respect to the outer
metrics. 
Denote by $\Ltv(\vp)$ \em the set of Lipschitz trivial values of~$\vp$. \em 
\end{defi}
 
\begin{rk}
In light of Definition \ref{defLipschitztriv} and Remark \ref{rk:empty}, the 
subset $\Ltv(\vp)$ is open and contains $\Kp\setminus\clos(Im(\vp))$.
In particular, the mapping $\vp$ attains a Lipschitz 
trivial value only if its image is Zariski-dense in $\Kp$.
\end{rk}

\begin{ex}\label{ex-simple}
The set of Lipschitz trivial values of the polynomial mapping $\K^3\to\K^2$ 
defined as $(x,y,z) \mapsto (x,xy + xz)$ is
$\K^2\setminus 0\times \K$. There is a single critical value $(0,0)$, and none
of the 
values $(0,b)$ with $b\neq 0$ is taken. Moreover, 
each level $(a,b)$ with $a\neq0$ is an affine line.
\end{ex}

The next result emphasizes the rigid asymptotic behaviour of  levels  near a 
Lipschitz trivial value.

\begin{prop}\label{lemfLipschitz} 
Assume that $\bc$ is a Lipschitz trivial value of $\vp:\Kn\to\Kp$. There exists a neighbourhood
$\cV$ of $\bc$ such that the following properties hold: 
\begin{enumerate}[(i)]
\item \label{pointLipschitz} the mapping $\vp$ is Lipschitz on 
$\vp^{\mm 1}(\cV)$;
\item \label{pointTube} there exist $0<\delta<\epsilon$ such that 
$$ 
T_{\delta}(\vp^{\mm 1}(\bc))\subset \vp^{\mm 1}(\cV) \subset  
T_{\epsilon}(\vp^{\mm 1}(\bc)),
$$
\end{enumerate}
where the open tube $T_{r}(S)$ of radius $r$ around a subset $S$ of $\Kn$ 
is defined as 
$$
T_{r}(S):\ee \{ \bx\in \Kn : \dist (\bx,S)<r    \}.
$$
\end{prop}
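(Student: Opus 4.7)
The Lipschitz trivial hypothesis supplies a neighbourhood $\cV_0$ of $\bc$ together with a bi-Lipschitz fibred homeomorphism $H:\vp^{-1}(\bc)\times\cV_0\to \vp^{-1}(\cV_0)$ satisfying $\vp\circ H(\by,\bt)=\bt$. Fix a common Lipschitz constant $L\geq 1$ for $H$ and $H^{-1}$, the product space being equipped with its usual product metric. Choose $\rho>0$ so small that $\overline{B(\bc,\rho)}\subset\cV_0$ and set $\cV:=B(\bc,\rho)$. The plan is to derive both conclusions from $L$ and $\rho$, the only non-formal step being the inner tube inclusion.

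Item \eqref{pointLipschitz} reads directly off the fibred identity: for $\bx_1,\bx_2\in\vp^{-1}(\cV_0)$ one has $H^{-1}(\bx_i)=(\by_i,\vp(\bx_i))$, and since the projection onto the second factor of the product metric is $1$-Lipschitz,
$$
\|\vp(\bx_1)-\vp(\bx_2)\|\leq\|H^{-1}(\bx_1)-H^{-1}(\bx_2)\|\leq L\|\bx_1-\bx_2\|.
$$
So $\vp$ is $L$-Lipschitz on $\vp^{-1}(\cV_0)$, and a fortiori on $\vp^{-1}(\cV)$. The outer half of \eqref{pointTube} is equally formal: given $\bx\in\vp^{-1}(\cV)$, write $H^{-1}(\bx)=(\by,\bt)$; the identity $\vp\circ H(\by,\bc)=\bc$ places $H(\by,\bc)$ in $\vp^{-1}(\bc)$, and Lipschitz continuity of $H$ yields
$$
\dist(\bx,\vp^{-1}(\bc))\leq\|H(\by,\bt)-H(\by,\bc)\|\leq L\|\bt-\bc\|<L\rho,
$$
so one may take $\epsilon:=L\rho$.

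The main obstacle is the inner inclusion $T_\delta(\vp^{-1}(\bc))\subset\vp^{-1}(\cV)$: the bi-Lipschitz control carried by $H$ only holds on $\vp^{-1}(\cV_0)$, whereas the tube $T_\delta$ is defined by the intrinsic Euclidean distance in $\Kn$, with no a priori reason to sit inside $\vp^{-1}(\cV_0)$. The plan is to choose any $\delta>0$ with $L\delta<\rho$. For $\bx\in T_\delta(\vp^{-1}(\bc))$, pick $\bx_0\in\vp^{-1}(\bc)$ with $\|\bx-\bx_0\|<\delta$ and follow the segment $\bx(t):=(1-t)\bx_0+t\bx$ for $t\in[0,1]$. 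Since $\vp^{-1}(\cV_0)$ is open and contains $\bx_0$, the supremum $T:=\sup\{t\in[0,1]:\bx([0,t])\subset\vp^{-1}(\cV_0)\}$ is strictly positive. If $T<1$, then by continuity $\vp(\bx(T))\in\overline{\cV_0}\setminus\cV_0$, forcing $\|\vp(\bx(T))-\bc\|\geq\rho$ since $\overline{B(\bc,\rho)}\subset\cV_0$; but item \eqref{pointLipschitz} applied for $t<T$ together with continuity of $\vp$ yields $\|\vp(\bx(T))-\bc\|\leq L\|\bx-\bx_0\|<L\delta<\rho$, a contradiction. Hence $T=1$, so $\bx\in\vp^{-1}(\cV_0)$, and \eqref{pointLipschitz} once more delivers $\|\vp(\bx)-\bc\|<L\delta<\rho$, that is $\bx\in\vp^{-1}(\cV)$. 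The inequality $\delta<\epsilon$ follows from $L\geq 1$, completing the proof.
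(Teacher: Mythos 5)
Your proof is correct and follows essentially the same route as the paper's: extract a bi-Lipschitz constant $L$ from the fibred trivialization, get (i) by projecting onto the value factor, the outer inclusion with $\epsilon=L\rho$ by comparing $\bx$ with the point of $\vp^{-1}(\bc)$ in the same $\psi$-fibre, and the inner inclusion with $L\delta<\rho$ from (i). The one place you go beyond the paper is the inner inclusion, where your segment/continuity argument justifies what the paper dispatches in one line (``Point (i) yields the first inclusion''); this extra step is genuinely needed, since the Lipschitz estimate is only available on $\vp^{-1}(\cV_0)$ and a point of the tube is not a priori in that set, so your treatment is a welcome sharpening rather than a detour.
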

\begin{proof}
Denote by $d_\bc$  the outer metric on $\vp^{\mm 1}(\bc)$ and by 
$\cU := \vp^{\mm 1}(\cV)$.
Since $\vp$ has a Lipschitz trivial value at $\bc$, there exist an open ball 
$\cV\ee B^p(\bc,r)$ of $\Kp$ and a bi-Lipschitz homeomorphism 
$$
G= (\vp, \psi): \cU \mapsto \cV \times \vp^{-1}(\bc).
$$ 
Therefore, there exists $L > 1$ such that for any $\bx,\bx' 
\in \cU$ we have 
\begin{eqnarray}
\label{eqnbilipschitz}
 \frac{1}{L} \normpreimage{\bx-\bx'}  \leq \normproduct{G(\bx) - 
G(\bx')} \leq L \normpreimage{\bx-\bx'} .
\end{eqnarray}

Point (i) follows from inequalities \eqref{eqnbilipschitz} since
for all $\bx,\bx' \in \cU$ we find 
$$
\| \vp(\bx) - \vp(\bx') \| \leq \| \vp(\bx) - \vp(\bx') \| + 
d_\bc(\psi(\bx),\psi(\bx'))  \leq L \normpreimage{\bx-\bx'}.
$$

To prove (ii), define the following radii 
$$
\delta :\ee \frac{r}{L} \;\; {\rm and} \;\; \epsilon:\ee L r.
$$
Point (i) yields  the first inclusion $T_{\delta}(\vp^{\mm 1}(\bc)) 
\subset \cU$. 
Note that for $\bc,\bt\in \cV$ and $\bx'\in \vp^{-1}(\bt)$, there exists $\bx 
\in \vp^{-1}(\bc)$ such that $\psi(\bx) = \psi(\bx')$. Therefore, 
estimates~\eqref{eqnbilipschitz} provide 
$$
\dfrac{1}{L} \normpreimage{\bx-\bx'} \leq  \normproduct{G(\bx) - G(\bx')} 
= \|\bc \mm \bt\| +  d_\bc(\psi(\bx),\psi(\bx'))   = \|\bc \mm \bt\|  .
$$
Thus we obtain $\cU\subset  T_{\epsilon}(\vp^{\mm 1}(\bc))$.
\end{proof}
\begin{rmk}\label{rk:partial-bounded} 
Point (i) of Proposition \ref{lemfLipschitz} implies that each first order 
partial derivative of each component of the mapping $\vp$ is bounded over 
$\vp^{-1}(\cV)$.
\end{rmk}

\begin{prty}\label{prtySuspension}
Let $\tau:\Km\to \Kp$ be a mapping Lipschitz trivial at the value $\bc$ and $\pi:\Kn\to\Km$ be a linear surjective projection. Then the mapping
$\tau\circ\pi:\Kn\to\Kp$ is Lipschitz trivial at the value $\bc$. 
\end{prty}
\begin{proof}
Let $\vp := \tau\circ\pi$. Up to a $\K$-linear change of coordinates in $\Kn$, 
we can assume that for any subset $V$ of $\Kp$ the following holds true
$$
\vp^{\mm 1}(V)\ee \tau^{\mm 1}(V)\times \K^{n\mm m}.
$$
Denote $(\bu,\bv) \in \Km \times \K^{n\mm m} = \Kn $. 
If the bi-Lipschitz homeomorphism
$
G : \tau^{-1}(\cV) \mapsto \cV \times \tau^{-1}(\bc) 
$ 
provides trivialisation of $\tau$ over a neighbourhood $\cV$ of $\bc$, then
the mapping  
$$
H : \vp^{-1}(\cV) \to \cV\times \vp^{-1}(\bc), \quad (\bu,\bv) \to (G(\bu), 
\bv)
$$
is a bi-Lipschitz homeomorphism trivialising $\vp$ over $\cV$. 
\end{proof}

\begin{prty}\label{prop:properness-lipschit}
Let $\vp:\Kn\to\Kp$ be a smooth  mapping with a nowhere dense set of critical 
values.  Any regular value of $\vp$ that is also a  value of properness 
is a Lipschitz trivial value.
\end{prty}
\begin{proof}
Let $\cV$ be a non-empty open subset of $\Kp$ such that $\vp$ is proper
over $\cV$. 
We can further assume that $\clos(\cV)$ does not intersect with $K_0(\vp)$.
Therefore, the mapping $\vp$ is $\cC^\infty$ locally trivial over $\cV$ by 
Ehresmann's Theorem.
The restriction of any $\cC^\infty$ trivialisation of~$\vp$ over $\cV$
to any open subset $\cU$ relatively compact in $\cV$ is necessarily 
bi-Lipschitz over~$\cU$.
\end{proof}
\begin{prop}\label{prop:lip-triv-reg}
Let $\vp:\Rn\to\Rp$ be a 
$\cC^k$ mapping with $k\geq \max(n-p+1,1)$. 
Any Lipschitz trivial value of $\vp$ is a regular value.
\end{prop}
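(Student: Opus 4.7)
The plan is to show directly that every $\bx_0\in\vp^{\mm 1}(\bc)$ is a regular point of $\vp$, so that $\bc$ is a regular value. The case $\vp^{\mm 1}(\bc)=\emptyset$ is vacuous and can be set aside, so I assume a point $\bx_0\in\vp^{\mm 1}(\bc)$ is given.

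First I would produce a Lipschitz right-inverse of $\vp$ passing through $\bx_0$. Let $H:\vp^{\mm 1}(\bc)\times\cV\to\vp^{\mm 1}(\cV)$ be the bi-Lipschitz trivialisation granted by Definition~\ref{defLipschitztriv}. Its restriction $H(\cdot,\bc)$ is a bi-Lipschitz self-bijection of $\vp^{\mm 1}(\bc)$, so some $\by_0\in\vp^{\mm 1}(\bc)$ satisfies $H(\by_0,\bc)=\bx_0$. The map $\gamma:\cV\to\Rn$ defined by $\gamma(\bt):= H(\by_0,\bt)$ is then $L$-Lipschitz, where $L$ is the bi-Lipschitz constant of $H$, and satisfies $\gamma(\bc)=\bx_0$ together with $\vp\circ\gamma=\mathrm{id}_\cV$.

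The key step combines the $\cC^1$ Taylor expansion of $\vp$ at $\bx_0$ with the Lipschitz control on $\gamma$. For an arbitrary $\bu\in\Rp$ and small $s>0$, evaluating $\vp(\gamma(\bc+s\bu))=\bc+s\bu$ and expanding the left-hand side gives
$$
s\bu = D\vp(\bx_0)\bigl(\gamma(\bc+s\bu)-\bx_0\bigr) + o\bigl(\|\gamma(\bc+s\bu)-\bx_0\|\bigr).
$$
Since $\gamma$ is $L$-Lipschitz, the remainder is $o(s)$ and the vectors $\bv_s:=(\gamma(\bc+s\bu)-\bx_0)/s$ are bounded by $L\|\bu\|$. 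Bolzano--Weierstrass yields a sequence $s_k\to 0^{+}$ with $\bv_{s_k}\to\bv$, and passing to the limit provides $\bu=D\vp(\bx_0)\bv$. As $\bu$ was arbitrary, $D\vp(\bx_0)$ is surjective.

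I do not expect any essential obstacle. The only subtle point is that Definition~\ref{defLipschitztriv} does not a priori force $H(\bx,\bc)=\bx$, which is why the bijection argument producing $\by_0$ is needed to hit the prescribed $\bx_0$. Observe also that the proof uses only $\vp\in\cC^1$; the stronger hypothesis $k\geq n\mm p\pp 1$, which is the Sard threshold when $n\geq p$, does not enter the argument and may therefore just be a book-keeping convention of the paper.
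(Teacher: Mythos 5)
Your argument is correct, and it takes a genuinely different route from the paper. The paper factors the statement through Lemma~\ref{lem:lip-triv-reg}: it works with the inverse $H$ of $(\vp,\psi)$, invokes Rademacher's theorem to differentiate $H$ almost everywhere on $\cV\times F$, shows that limits of these differentials still have rank $n$, and concludes by a chain-rule argument that $D\vp$ has rank $p$ on all of $\vp^{\mm 1}(\cV)$; the Sard-threshold hypothesis $k\geq\max(n\mm p\pp 1,1)$ is invoked at the start of that proof, essentially because the lemma requires the fibre $F$ to be a $\cC^1$ submanifold, which is not known a priori for $F=\vp^{\mm 1}(\bc)$. Your proof instead builds, through each point $\bx_0$ of the fibre, a Lipschitz local section $\gamma=H(\by_0,\cdot)$ of $\vp$ (your preliminary observation that $H(\cdot,\bc)$ is a self-bijection of $\vp^{\mm 1}(\bc)$, needed because Definition~\ref{defLipschitztriv} does not impose $H(\bx,\bc)=\bx$, is exactly the right fix), and extracts surjectivity of $D\vp(\bx_0)$ from the first-order expansion together with the bound $\|\gamma(\bc\pp s\bu)\mm\bx_0\|\leq Ls\|\bu\|$ and a compactness argument on the difference quotients. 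This buys two things: it needs only differentiability of $\vp$ at the points of the fibre (so $k\geq 1$ suffices and the Sard hypothesis indeed plays no role in your argument, whereas it does in the paper's route), and it completely avoids Rademacher's theorem and any regularity assumption on the fibre, thereby sidestepping the subtlety in applying the paper's lemma with $F=\vp^{\mm 1}(\bc)$. Note also that the same section argument applied at an arbitrary $\bt\in\cV$ recovers the stronger conclusion of the lemma, namely absence of critical points on all of $\vp^{\mm 1}(\cV)$.
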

The proof will follow from the next result.
\begin{lem}\label{lem:lip-triv-reg} 
Let  $f: \cU \to \cV$ be a $\cC^1$ mapping, where $\cU$ and $\cV$ are open
subsets of $\Rn$ and $\Rp$ respectively. If there exists a bi-Lipschitz 
homeomorphism 
$$
(f,\psi) : \cU \to \cV \times F
$$
where $F$ is a $C^1$ sub-manifold of some $\R^q$,
then $f$ has no critical points in 
$\cU$.
\end{lem}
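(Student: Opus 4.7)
The plan is to extract from the bi-Lipschitz homeomorphism a Lipschitz local section of $f$ through any prescribed point of $\cU$, and then argue that a $\cC^1$ map admitting such a section at a point must be a submersion there.

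Fix $x_0 \in \cU$ and set $c := f(x_0)$, $y_0 := \psi(x_0)$. Writing $G := (f,\psi)$ and
$$\sigma(v) := G^{-1}(v,y_0),$$
the map $\sigma : \cV \to \cU$ is Lipschitz with some constant $L$ (since $G^{-1}$ is), and by construction $\sigma(c)=x_0$ together with $f\circ\sigma = \mathrm{id}_{\cV}$. Thus $\sigma$ is a Lipschitz local section of $f$ through $x_0$.

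For arbitrary $v \in \R^p$ I would consider the incremental quotients
$$u_t := t^{-1}(\sigma(c+tv)-x_0), \quad t>0 \text{ small.}$$
The Lipschitz bound gives $\|u_t\| \leq L\|v\|$, so by Bolzano--Weierstrass there is a sequence $t_n \downarrow 0$ with $u_{t_n} \to u \in \R^n$. The $\cC^1$ Taylor expansion of $f$ at $x_0$ combined with $f \circ \sigma = \mathrm{id}$ gives
$$
t_n v = f(\sigma(c+t_n v)) - f(x_0) = df_{x_0}(t_n u_{t_n}) + o(\|\sigma(c+t_n v) - x_0\|) = t_n \, df_{x_0}(u_{t_n}) + o(t_n),
$$
where the last equality uses $\|\sigma(c+t_n v)-x_0\| \leq L t_n \|v\|$. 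Dividing by $t_n$ and letting $n \to \infty$ yields $df_{x_0}(u) = v$. As $v$ was arbitrary, $df_{x_0}$ is surjective; as $x_0 \in \cU$ was arbitrary, $f$ has no critical point in $\cU$.

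The only delicate point is that $\sigma$ itself need not be differentiable at $c$, which blocks a direct chain-rule application to the identity $f \circ \sigma = \mathrm{id}_{\cV}$. The compactness argument on the bounded incremental quotients $u_t$, together with the Lipschitz control that converts the intrinsic remainder $o(\|\sigma(c+tv)-x_0\|)$ into $o(t)$, is exactly what compensates for the missing differentiability of $\sigma$ and lets the $\cC^1$ information about $f$ at the single point $x_0$ do its job.
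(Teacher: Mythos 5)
Your proof is correct, and it takes a genuinely different route from the paper's. The paper argues through the inverse trivialization $H=(f,\psi)^{-1}$: by Rademacher's theorem $H$ is differentiable almost everywhere, at such points its differential has full rank because of the bi-Lipschitz bounds, the lower bound persists under limits of differentials, and continuity of $Df$ then forces $Df$ to have rank $p$ at every point of $\cU$. You instead work pointwise: the slice $v\mapsto G^{-1}(v,\psi(x_0))$ is a Lipschitz section of $f$ through $x_0$, and the bounded difference quotients $u_t$, together with the conversion of the remainder $o(\|\sigma(c+tv)-x_0\|)$ into $o(t)$ via the Lipschitz bound, yield $df_{x_0}(u)=v$ for every $v$, hence surjectivity of $df_{x_0}$. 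Your argument is more elementary and slightly more general: it avoids Rademacher's theorem and any discussion of limits of a.e.\ differentials, it never uses the $\cC^1$ manifold structure of $F$ (any metric space, indeed any set carrying the slice, would do), it only uses the Lipschitz continuity of $G^{-1}$ rather than the full bi-Lipschitz property, and it only needs $f$ to be differentiable at each point rather than $\cC^1$. What the paper's approach buys in exchange is a short global statement (all limits of differentials of the trivialization are invertible) that fits its broader discussion of rigidity of bi-Lipschitz trivializations, but as a proof of this lemma your section-and-difference-quotient argument is complete and self-contained.
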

\begin{proof}
Let $H:\cV\times F \to \cU$ be the inverse of the bi-Lipschitz homeomorphism
$(f,\psi)$. Since $H$ is bi-Lipschitz, it is differentiable almost everywhere. 
At a point $\by_0$ where $H$ is differentiable, its differential $D_{\by_0} H$ has 
rank $n$. 
Moreover, having $H$ bi-Lipschitz implies that any limit of the form $D = 
\lim_{\by_n \to \by}  D_{\by_n}H$ at the 
given point $\by$ of $\cV\times F$ has also rank $n$. 
Therefore the differential of $f$ must have rank $p$ at each point of $\cU$.
\end{proof}
\begin{proof}[Proof of Proposition \ref{prop:lip-triv-reg}] 
By hypothesis $\vp$ satisfies Sard's Theorem. 
Let $\bc$ be a Lipschitz trivial value of $\vp$.
\\
If $\bc$ does not lie in $Im(\vp)$, the image of $\vp$, then it belongs to 
$\Rp\setminus\clos(Im(\vp))$, thus is a regular value.
\\
Assume $\bc$ is a value taken by $\vp$. Let $\cV$ be an open neighbourhood of $\bc$
over which $\vp$ is Lipschitz trivial.
Thus, by Lemma \ref{lem:lip-triv-reg}, 
the mapping $\vp$ has no critical point in $\vp^{-1}(\cV)$, 
therefore $\bc$ is a regular value.
\end{proof}

\begin{prty}\label{cor:asymptotic-set}
Let $\vp:\Kn\to\Kp$ be a  mapping locally  Lipschitz trivial over the 
connected open subset $\cV\subset \Kp$.  
Then the family $(\vp^{\mm 1}(\bt)^\infty)_{\bt\in \cV}$ of accumulation 
sets at infinity of the levels of $\vp$ is constant, i.e.
$$
\vp^{\mm 1}(\bt)^\infty\ee \vp^{\mm 1}(\cV)^\infty, \;\; \forall \bt \in \cV.
$$
\end{prty}
\begin{proof} Consider two sequences $(\bx_k)_k$ and $(\bx_k')_k$ of $\Kn$ 
satisfying 
the following property: there exists a positive constant $A$ such that
$$
|\bx_k - \bx_k'| \; \leq \; A \;\, {\rm for} \;\, k \gg 1.
$$
If furthermore $|\bx_k|$ goes to $\infty$ and does so such that $[\bx_k:1] \to 
[\lbd:0] \in H_\infty$ as $k \to \infty$, then we deduce that $|\bx_k'|$ goes
to $\infty$ and $[\bx_k':1] \to [\lbd:0]$ as $k$ goes to $\infty$.

Let $\bc\in\cV$ be a Lipschitz trivial value of $\vp$. 
Up to taking a smaller $\cV$ containing $\bc$, 
point (ii) of Proposition~\ref{lemfLipschitz} states that 
$\vp^{-1}(\cV)$ is contained in the open tube $T_\ve (\vp^{-1}(\bc))$ for some 
positive radius $\ve$. The first part of the proof then gives the result.
\end{proof}

\section{Polynomial mappings with Lipschitz trivial values depend on fewer variables}\label{secMain}

Let $f:\Kn\to\Kp$ be a polynomial mapping.
The level $f^{-1}(\bt)$ is denoted by $F_\bt$.

\begin{thm}\label{MainThm}
Let $f:\Kn\to \Kp$ be a polynomial mapping with $\dim F_\bc^\infty \ee n-1-m$
for a value $\bc\in\Kp$. If the mapping $f$ attains $\bc$ as a Lipschitz 
trivial value, then there exist a polynomial mapping $g:\Km \to \Kp$
which is proper at $\bc$ and a linear surjective projection $\pi:\Kn\to\Km$
such that
$$
f\ee g\circ \pi.
$$
\end{thm}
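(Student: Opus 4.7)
The plan is to use the Lipschitz condition at $\bc$ to produce a linear subspace $L\subset\Kn$ of dimension $n-m$ contained in the kernel of $Df$ at every point of $f^{-1}(\cV)$, and then to take $\pi$ to be the quotient $\Kn\to\Kn/L\cong\Km$. First, by Proposition~\ref{lemfLipschitz}(i) and Remark~\ref{rk:partial-bounded}, $f$ is Lipschitz on an open neighbourhood $f^{-1}(\cV)$ of $F_\bc$ and each partial derivative $\partial_i f_j$ is a polynomial bounded on $f^{-1}(\cV)$; point (ii) of the same proposition ensures $T_\delta(F_\bc)\subset f^{-1}(\cV)$, so each such polynomial is bounded on the tube $T_\delta(F_\bc)$, whose accumulation at infinity coincides with $F_\bc^\infty$ (a bounded thickening does not modify the set at infinity).

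The core of the argument is the following algebraic principle: any polynomial $P$ of degree $d$ bounded on an unbounded set $X\subset\Kn$ must satisfy $P_d|_{X^\infty}\equiv 0$, as one sees by evaluating $P(x_k)/|x_k|^d\to P_d(x_k/|x_k|)$ along sequences $x_k\in X$ escaping to infinity. Applied to $P=\partial_v f_j$ for each $v\in\Kn$, this gives $\partial_v(f_j)_{d_j}|_{F_\bc^\infty}\equiv 0$ for every $v$, and Euler's identity upgrades this to the vanishing of $(f_j)_{d_j}$ itself on $F_\bc^\infty$. Cascading the reasoning through the lower homogeneous layers of $\partial_v f_j$---using the \L{}ojasiewicz rate at which the algebraic set $F_\bc$ approaches its cone $F_\bc^\infty$ to convert vanishing on $F_\bc^\infty$ into quantitative decay along sequences in $F_\bc$---gives the sharpened conclusion that $\partial_v f_j\equiv 0$ as a polynomial whenever $v\in\Kn$ is a lift of a point of $F_\bc^\infty$. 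The subspace $L:=\{v\in\Kn:\partial_v f\equiv 0\}$ is therefore linear, its projectivisation equals $F_\bc^\infty$, and $\dim L=n-m$.

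With $L$ in hand, pick any linear surjection $\pi:\Kn\to\Km$ with $\ker\pi=L$; since $f$ is invariant under translations by $L$ it descends to a polynomial map $g:\Km\to\Kp$ with $f=g\circ\pi$. To verify properness of $g$ at $\bc$, I would show $\pi(F_\bt)=g^{-1}(\bt)$ is bounded for every $\bt$ in a sufficiently small $\cV'\subset\cV$: by Property~\ref{cor:asymptotic-set} the asymptotic set $F_\bt^\infty=F_\bc^\infty$ is contained in $\KP(\ker\pi)$, so each asymptotic direction of $F_\bt$ projects to $0\in\Km$, forcing $\pi(F_\bt)$ to have empty accumulation at infinity and thus to be bounded. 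The main obstacle is the middle paragraph: the top-degree vanishing is immediate, but upgrading it to the pointwise identity $\partial_v f\equiv 0$ for every $v$ in the cone over $F_\bc^\infty$ requires the delicate cascade through all lower homogeneous layers, with quantitative control of how fast $F_\bc$ approaches $F_\bc^\infty$; this is the technical heart of the proof.
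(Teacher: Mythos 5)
There is a genuine gap at exactly the point you flag as the ``technical heart''. The factorization step and the properness argument at the end are essentially sound (and close to the paper's: the paper gets compactness of $g^{-1}(\bc)$ from the dimension count $n-1-m=\dim F_\bc^\infty=n-m+\dim g^{-1}(\bc)^\infty$ and boundedness of $g^{-1}(\cV)$ from the tube inclusion, which is a bit more direct than your use of Property~\ref{cor:asymptotic-set}). But the central claim --- that $\partial_v f\equiv 0$ \emph{as a polynomial on $\Kn$} for every $v$ lifting a point of $F_\bc^\infty$ --- is not proved, and the proposed route cannot deliver it as described. Your ``core algebraic principle'' only yields that the leading form of $\partial_v f_j$ vanishes on the cone over $F_\bc^\infty$, and even if the ``cascade through lower homogeneous layers'' were carried out, its natural output would be vanishing of all homogeneous components of $\partial_v f_j$ on that cone, i.e.\ vanishing of $\partial_v f_j$ on a subvariety of dimension $n-m<n$. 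That is far from the global identity $\partial_v f_j\equiv 0$: boundedness (or even decay) of a polynomial on $F_\bc$ and on a bounded tube around it constrains the polynomial only near $F_\bc$ and its directions at infinity (compare: $P(x,y)=y$ decays to $0$ along the fibre $\{y=0\}$ without being identically zero). Some mechanism producing a Zariski-dense set on which $\partial_v f$ vanishes is indispensable, and your sketch contains none.

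This is precisely what the paper's Lemma~\ref{lemNotDepend} supplies, and it uses hypothesis (ii) of Proposition~\ref{lemfLipschitz} in a way your argument does not: one selects an arc $\gm(t)=(t^d,p(t)+A_0(1/t))$ on $F_\bc$ tending to infinity in the direction $v$, then perturbs it to the $n$-parameter polynomial family $\gm_\ve(t)=(t^d,p(t)+\ve)$, which stays inside $f^{-1}(\cV)$ \emph{because of the tube inclusion (ii)}; by (i) the polynomial $t\mapsto f(\gm_\ve(t))$ is bounded, hence constant, and differentiating plus the degree bound $\deg p\le d-1$ and boundedness of the partials forces $\dd_{x_1}f\circ\gm_\ve\equiv 0$ for all small $\ve$; since $(t,\ve)\mapsto\gm_\ve(t)$ is dominant, this gives $\dd_{x_1}f\equiv 0$ on all of $\Kn$. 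It is this perturbation off the fibre into an open, dominant family --- not a Łojasiewicz-rate refinement of the leading-form computation along $F_\bc$ --- that converts the Lipschitz data into the identical vanishing of the directional derivative; without it (or an equivalent device) your proof does not close. A smaller point: before the factorization is established you should only claim $L\supseteq\mathrm{span}$ of the cone over $F_\bc^\infty$, hence $\dim L\ge n-m$ (equality then follows, since $\dim L>n-m$ would force $\dim F_\bc^\infty>n-1-m$); the statement that the projectivisation of $L$ \emph{equals} $F_\bc^\infty$ is only available a posteriori.
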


We start with the following key result.

\begin{lem}\label{lemNotDepend}
Let $f:\Kn\to\Kp$ be a polynomial mapping and assume that the point 
$[1:0:\dots:0]$ lies in $F_\bc^\infty$. If there exists a neighbourhood $\cV$
of $\bc$ such that $f$ satisfies points (i) and (ii) of
Proposition~\ref{lemfLipschitz}, then the mapping $f$ does not depend on 
the coordinate $x_{1}$. 
\end{lem}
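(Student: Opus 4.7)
The plan is to show that each $Q_j := \partial f_j/\partial x_1$ vanishes on the non-empty open set $f^{-1}(\cV)$, which forces $Q_j\equiv 0$ by polynomiality. By Remark~\ref{rk:partial-bounded} the partials $\partial f_j/\partial x_i$ are uniformly bounded on $f^{-1}(\cV)$, hence by some constant $L$ on the tube $T_\delta(F_\bc)$. Furthermore, the proof of Property~\ref{cor:asymptotic-set} uses only (ii), and applied here it yields $F_\bt^\infty = F_\bc^\infty \ni [1:0:\dots:0]$ for every $\bt \in \cV$.

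The key geometric step is a \emph{tangent-at-infinity} argument. For each $\bt \in \cV$ with non-empty fiber, I would use the curve selection lemma at infinity (algebraic in the complex case, semi-algebraic in the real case) to produce an analytic arc $\gamma:(0,\varepsilon)\to F_\bt$ with $\gamma(t)\to\infty$ in the direction $e_1$ as $t\to 0$; its Puiseux expansion then gives $\gamma'_i(t)/\gamma'_1(t)\to 0$ for $i\geq 2$. Since $f\circ\gamma \equiv \bt$ is constant, differentiation in $t$ yields
\[
Q_j(\gamma(t)) \;=\; -\sum_{i\geq 2}\partial f_j/\partial x_i(\gamma(t))\cdot \frac{\gamma'_i(t)}{\gamma'_1(t)} \;\xrightarrow[t\to 0]{}\; 0,
\]
as the partials are bounded by $L$ while the ratios vanish. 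The restriction of the polynomial $Q_j$ to the algebraic curve supporting $\gamma$ is then a rational function, bounded on the affine part and with limit $0$ at the point at infinity $[1:0:\dots:0]$; extending to the projective closure of this curve, it is regular there and vanishing, so it must be identically zero along $\gamma$.

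Now, inside a fixed fiber $F_\bt$, the arcs going to $[1:0:\dots:0]$ form an $(n-p-1)$-parameter analytic family (parameterised by the local branches of the accumulation set $F_\bt^\infty$ at that point), whose union is Zariski-dense in $F_\bt$; hence $Q_j$ vanishes on $F_\bt$. Letting $\bt$ range over $\cV$ then gives $Q_j\equiv 0$ on the non-empty open set $f^{-1}(\cV)=\bigcup_{\bt\in\cV}F_\bt$, whence $Q_j\equiv 0$ on $\Kn$. The main obstacle will be justifying the density assertion rigorously: one must construct the arcs with enough regularity that their union sweeps a full-dimensional subset of each fiber, and then propagate the construction continuously in $\bt$ so as to cover an open subset of $f^{-1}(\cV)$. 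A parametric version of the curve selection lemma, combined with the constancy of $F_\bt^\infty$ supplied by Property~\ref{cor:asymptotic-set}, should yield exactly this.
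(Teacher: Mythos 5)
Your strategy has two genuine gaps, one of which you already sense. First, the reduction to fibers requires $[1:0:\dots:0]\in F_\bt^\infty$ for \emph{every} $\bt\in\cV$, but this does not follow from hypotheses (i) and (ii) alone: the tube inclusion in (ii) gives $F_\bt\subset T_\epsilon(F_\bc)$, hence only $F_\bt^\infty\subset F_\bc^\infty$; the reverse inclusion in Property~\ref{cor:asymptotic-set} is obtained by using the bi-Lipschitz trivialization itself (to pair each point of $F_\bc$ with a point of $F_\bt$ at distance $\leq L\|\bc-\bt\|$), which the lemma does not assume. Second, and more seriously, the density assertion you call ``the main obstacle'' is not a technicality but the crux, and it is false in general: the union of arcs inside a fixed fiber $F_\bt$ tending to the single point $[1:0:\dots:0]$ need not be Zariski dense in $F_\bt$ --- for instance when $F_\bt$ is reducible and only one component accumulates in that direction (in a level of the type $\{xy=0\}$, every arc tending to $[1:0:0]$ lies in $\{y=0\}$). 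That the fibers are cylinders in the $e_1$-direction is precisely what the lemma is proving, so it cannot be fed into the sweeping argument. Finally, in the real case your upgrade from $Q_j(\gamma(t))\to 0$ to $Q_j\equiv 0$ along $\gamma$ fails: over $\R$ a rational function bounded on the affine part of a curve with limit $0$ at one point at infinity need not vanish identically (the ``regular on a complete curve, hence constant'' step is specific to $\C$).

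For contrast, the paper never sweeps the fibers. It takes the single arc $\gamma\subset F_\bc$ tending to $[1:0:\dots:0]$, writes it (after reparametrization) as $\gamma(t)=(t^d,\,p(t)+A_0(1/t))$ with $p$ polynomial of degree $\leq d-1$, and perturbs it \emph{transversally to the fibers} into the $(n-1)$-parameter family of polynomial arcs $\gamma_\ve(t)=(t^d,\,p(t)+\ve)$. These arcs are not contained in levels of $f$, but by point (ii) they stay in $f^{-1}(\cV)$ for $\|\ve\|$ small and $|t|$ large, and by point (i) the polynomial $t\mapsto f(\gamma_\ve(t))$ is bounded, hence constant; differentiating and comparing degrees (using the boundedness of the partials along $\gamma_\ve$) gives $\partial_{x_1}f\circ\gamma_\ve\equiv 0$, and since $(t,\ve)\mapsto\gamma_\ve(t)$ is a dominant polynomial map, $\partial_{x_1}f\equiv 0$. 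This perturbation is exactly what replaces your density step and works uniformly over $\R$ and $\C$; I would rework your argument along these lines rather than attempt a parametric curve-selection proof of the sweeping statement.
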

\begin{proof}
Since $[1:0:\dots:0]\in F_\bc^\infty$, 
there exists an arc $\gm: I \to F_\bc$  parametrized as 
$$
\gm (t) = (t^d, p(t)+A_0(1/t)) \in \K \times \K^{n-1},
$$ 
where $I$ is a connected component of the complement of an Euclidean ball 
of $\K$, the 
mapping $p~:~\K~\to~\K^{n\mm 1}$ is polynomial of degree $\leq d-1$, and $A_0$ 
is a $\K$-analytic map germ $(\K,0) \to (\K^{n-1},\bbo)$.

Consider the following dominant polynomial mapping 
$$
\Gm : \K\times\K^{n-1} \to \Kn, \;\; (t,\ve) \mapsto (t^d,p(t) + \ve).
$$
Let $\gm_\ve :\K \to \Kn$ be the polynomial arc 
$$
\gm_\ve:t\mapsto \gm_\ve(t) := \Gm(t,\ve).
$$
Since $A_0(1/t)\to \bbo$ as $|t|\to \infty$, we conclude that 
$$
\|\gm_\ve(t) - \gm (t)\| = \left\|\ve - A_0(1/t)\right\| \to \|\ve\|. 
$$
Take a neighbourhood $\cV$ of $\bc$ in $\Kp$ such that $f$ satisfies 
points (i) and (ii) of Proposition~\ref{lemfLipschitz}.
Point (ii) of Proposition~\ref{lemfLipschitz} and the definition of $\Gm$
guarantee the existence of constants $\delta>0$ and $R>0$ such that 
\begin{eqnarray}\label{eqnGammainPreimage}
\gm_\ve(t)\in f^{-1}(\cV)
\end{eqnarray}
for any $\|\ve\|<\dlt$ and $t\in I$ such that $|t|>R$.
Since $f$ is Lipschitz on  $f^{-1}(\cV)$, we get
$$
\| f(\gm_\ve(t)) - \bc\| \leq L \cdot \|\ve - A_0(1/t)\| \to L\cdot  
|\vert \ve\|
$$
as $|t| \to \infty$. Therefore for each $\|\ve\| < \dlt$, the polynomial 
mapping $t\to f\circ\gm_\ve(t)$ is bounded, thus constant. 
Writing $\bx = (x_1,\by)$, we deduce 
$$
\bbo \equiv \frac{\rd}{\rd t} (f\circ\gm_\ve) (t) = 
d \cdot t^{d-1}\cdot\dd_{x_1} f (\gm_\ve(t)) + \dd_{\by}f(\gm_\ve(t)) \cdot p'(t).
$$
Since $p$ has degree $\leq d-1$ 
and the first order partial derivatives of $f$ are bounded along 
$\gm_\ve$ by Remark \ref{rk:partial-bounded},
for $\|\ve\| < \dlt$ we conclude that 
$$
(\dd_{x_1} f) \circ \gm_\ve \equiv 0.
$$
Since the subset $\K \times \{\ve:\|\ve\| < \dlt\} \subset 
\Kn$ is open and non-empty and the mapping $\Gm$ is dominant, we conclude
that the mapping $\dd_{x_1} f$ is identically null. 
\end{proof}

\begin{proof}[Proof of Theorem~\ref{MainThm}]
Note that $f$ satisfies the claim of Proposition~\ref{lemfLipschitz}. If 
$\dim F_\bc^\infty\ee \mm 1$, then the fibre $F_\bc$ is compact and from
point (ii) of Proposition~\ref{lemfLipschitz} the subset $f^{-1}(\cV)$ is
compact for  a small compact neighbourhood $\cV$ of $\bc$. Thus $f$ is proper
at $\bc$ and taking $\pi$ as the identity mapping of $\Kn$ yields the claim.

Assume $F_\bc^\infty$ is of dimension $n-1- m\geq 0$.  Thus there exist $n-m$ 
points $[\bv_1:0], \ldots, [\bv_{n-m}:0]$ of~$F_\bc^\infty$ such that the
vectors $\bv_1,\dots, \bv_{n-m}$ are $\K$-linearly independent in $\Kn$. 
Take a $\K$-linear change of coordinates $\ell:\Kn\to\Kn$ such that $\ell(\bv_j)
\ee \be_j$ for $j=1,\ldots,n-m$, where $\{\be_1, \dots, \be_n\}$ is the 
standard orthonormal basis of $\Kn$. Applying Lemma~\ref{lemNotDepend} 
 we conclude that the polynomial mapping $f\circ \ell$ depends 
only on $\bu := (x_{n-m+1},\dots, x_n)$. 
Let $g$ be the polynomial mapping restriction of $f\circ\ell$ to $\Km$,
the subspace of $\Kn$ generated by $\be_{n-m+1},\ldots,\be_n$.
Let $\pi_0 :\Kn\to \Km$ be the orthogonal projection of $\Kn$ onto  the 
subspace $\Km$. Therefore, we find
$$
f = g \circ \pi_0 \circ \ell^{-1}.
$$

Note that  $(f\circ \ell)^{\mm 1}(\bt)\ee \K^{n-m}\times g^{\mm 1}(\bt)$. 
Since
$$
n-1-m = \dim F_\bc^\infty = n-m + \dim g^{-1}(\bc)^{\infty},
$$
we deduce that $g^{-1}(\bc)$ is compact. From Proposition~\ref{lemfLipschitz} 
applied to the levels of $f\circ \ell$ over $\cV$ we get that 
$g^{\mm 1}(\cV)$ is bounded. Thus $\bc$ is a value of properness of $g$. 
\end{proof}

\section{On the set of Lipschitz trivial values of real and complex mappings}\label{secLipRealComplex}

This section presents some consequences of Theorem~\ref{MainThm}. In particular, 
complex  polynomial mappings admitting Lipschitz trivial values have a very 
rigid structure, while the real setting allows for more variety.

\begin{cor}\label{corComplexLtv}
A complex polynomial mapping  $f:\Cn\to \Cp$ attains a Lipschitz trivial value 
if and only if there exist a dominant polynomial mapping $g:\Cp\to \Cp$ 
and a linear surjective projection $\pi:\Cn\to\Cp$ such that
$$
f\ee g\circ \pi.
$$
In such a case we get
$$
\Ltv(f) = \ce^p\setminus {\rm Bif}(g).
$$
In particular, the set of regular Lipschitz trivial values is either empty or
the complement of an 
algebraic hypersurface.
\end{cor}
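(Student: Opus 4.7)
The plan is to combine Theorem~\ref{MainThm} with features specific to complex algebraic geometry. For the forward implication, I start from an attained Lipschitz trivial value $\bc$ of $f$ and invoke Theorem~\ref{MainThm} to obtain a factorization $f \ee g\circ \pi$ with $g:\C^m\to\Cp$ proper at $\bc$ and $\pi:\Cn\to\C^m$ linear surjective. To pin down $m\ee p$, I would first note that $g^{\mm 1}(\bc)$ is a nonempty compact complex analytic subset of $\C^m$, hence finite, so the fiber-dimension inequality forces $m\leq p$. Second, the bi-Lipschitz trivialization of $f$ over a neighborhood $\cV$ of $\bc$ surjects onto $\cV$, so every value in $\cV$ is attained by $f$ and therefore by $g$; thus $g$ is dominant and $m\geq p$. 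Hence $m\ee p$ and $g:\Cp\to\Cp$ is dominant, equivalently generically finite.

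For the converse direction of the ``if and only if'' and for the inclusion $\Cp\setminus\Bif(g)\subseteq \Ltv(f)$, I would rely on the Jelonek--Kurdyka identity $\Bif(g)\ee K_0(g)\cup J(g)$ recalled in the introduction. For any $\bc\notin\Bif(g)$, the map $g$ is regular and proper at $\bc$, so Property~\ref{prop:properness-lipschit} provides Lipschitz triviality of $g$ at $\bc$, and Property~\ref{prtySuspension} transfers it to $f\ee g\circ\pi$. Such a $\bc$ is automatically attained (since $g$ is dominant and its regular proper fibers are nonempty), which also settles the converse direction of the ``if and only if''.

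The main technical work lies in the reverse inclusion $\Ltv(f)\subseteq \Cp\setminus\Bif(g)$. Given $\bc\in\Ltv(f)$, I split on whether $\bc$ is attained. If $\bc\notin f(\Cn)$, the trivializing bi-Lipschitz homeomorphism over a small neighborhood $\cV$ of $\bc$ has empty source $F_\bc\times\cV$, so $f^{\mm 1}(\cV)\ee\emptyset$; hence $\bc\notin\clos(f(\Cn))\ee\clos(g(\Cp))$ and $g$ is trivially regular and proper at $\bc$. If $\bc$ is attained, Proposition~\ref{prop:lip-triv-reg}, applied to $f$ viewed as a smooth real map $\R^{2n}\to\R^{2p}$, gives $\bc\notin K_0(f)\ee K_0(g)$, the equality holding since $\pi$ is a submersion. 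The hard part is showing that $g$ is proper at $\bc$: after a unitary change of coordinates on $\Cn$, $\pi$ becomes the standard orthogonal projection (up to a linear automorphism of the target, which leaves $\Bif(g)$ invariant), so $F_\bc\ee \C^{n-p}\times g^{\mm 1}(\bc)$. Point (ii) of Proposition~\ref{lemfLipschitz} then yields a tube inclusion $f^{\mm 1}(\cV)\subseteq T_\epsilon(F_\bc)$, which orthogonality of $\pi$ converts into $g^{\mm 1}(\cV)\subseteq T_\epsilon(g^{\mm 1}(\bc))$. Since $\bc$ is a regular value of the dominant map $g:\Cp\to\Cp$, the fiber $g^{\mm 1}(\bc)$ is a finite set, so the tube is bounded and $g$ is proper at $\bc$. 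Assembling both inclusions yields $\Ltv(f)\ee\Cp\setminus\Bif(g)$, and the closing ``complement of an algebraic hypersurface'' statement follows immediately since $\Bif(g)$ is an algebraic hypersurface in $\Cp$ by Jelonek--Kurdyka.
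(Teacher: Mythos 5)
Your argument is correct and follows the same skeleton as the paper: Theorem~\ref{MainThm} for necessity, Properties~\ref{prtySuspension} and~\ref{prop:properness-lipschit} together with $\Bif(g)=K_0(g)\cup J(g)$ from \cite{JeKu} for sufficiency and for the hypersurface statement. The two places where you diverge are worth noting. First, to force $m=p$ the paper argues on a \emph{generic} value, using that a generic fibre of a dominant map has dimension $n-p$ and, in the complex setting, $\dim F_\bc=1+\dim F_\bc^\infty$; you instead work directly at the given attained value $\bc$, deducing $m\le p$ from the finiteness of the compact complex-algebraic fibre $g^{-1}(\bc)$ and $m\ge p$ from dominance of $g$, which follows because the trivialization makes the image of $f$ contain an open neighbourhood of $\bc$. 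This avoids having to pass to a generic Lipschitz trivial value and is arguably cleaner. Second, for the equality $\Ltv(f)=\C^p\setminus\Bif(g)$ the paper's justification is very terse (one line invoking Property~\ref{prtySuspension} and Proposition~\ref{prop:lip-triv-reg}), whereas you prove the nontrivial inclusion $\Ltv(f)\subseteq\C^p\setminus\Bif(g)$ in full: regularity via Proposition~\ref{prop:lip-triv-reg} (with the standard identification of real and complex regular points of a holomorphic map), and properness of $g$ at each $\bc\in\Ltv(f)$ by re-running the tube argument of Proposition~\ref{lemfLipschitz}(ii) after a unitary normalization of $\pi$, exactly as in the proof of Theorem~\ref{MainThm}. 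Both approaches buy the same result; yours fills in details the paper leaves implicit, at the cost of repeating the tube/properness computation. Minor remarks: your case $\bc\notin f(\C^n)$ in the reverse inclusion is actually vacuous, since $g$ dominant makes the image of $f$ Euclidean-dense in $\C^p$ (your treatment of it is nonetheless harmless), and the parenthetical ``regular proper fibres are nonempty'' deserves the one-line justification that non-attained values of a dominant complex map lie in $J(g)$.
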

\begin{proof}
Assume there exists a dominant polynomial mapping $g:\Cp\to\Cp$ such that 
$
f\ee g\circ \pi
$ 
for some linear surjective projection $\pi:\Cn\to\Cp$. Therefore $g$ is 
generically finite and by Properties~\ref{prtySuspension} 
and~\ref{prop:properness-lipschit}, the set $\Ltv(f)$ of Lipschitz trivial values 
is not empty.
For  the converse statement, note that $f$ is dominant and $n\mm p\ee \dim 
F_\bc \ee 1 \pp \dim F_\bc^\infty $ for a generic level~$\bc$ of $f$, so Theorem~\ref{MainThm} gives the claim. 

For the second part of the assertion, observe that the subset 
$\Ltv(g)\cap 
(J(g)\cap K_0(g))$ is empty by Property~\ref{prtySuspension} and 
Proposition \ref{prop:lip-triv-reg},
since $g$ is generically finite. 
We recall that $J(g)\cup K_0(g)\ee \Bif(g)$ and if non-empty, it is an 
algebraic hypersurface by~\cite{JeKu}.
\end{proof}

As a consequence, and with a different proof, we recover the main result 
of~\cite{FGS}.
\begin{cor}[\cite{FGS} Theorem 10]
A complex polynomial function $f:\Cn\to\C$ admits a Lipschitz trivial value if and 
only if it depends on a single variable, i.e., there exist $(n-1)$ linearly 
independent vectors $\bv_2, \ldots, \bv_n,$ of $\Cn$ such that 
$\dd_{\bv_i} f \equiv 0$ for all $i\geq 2$.
\end{cor}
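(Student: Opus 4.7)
The plan is to derive this as the $p=1$ specialization of Corollary~\ref{corComplexLtv}. When the target is $\C$, a dominant polynomial mapping $g:\C\to\C$ is exactly a non-constant univariate polynomial, while a linear surjective projection $\pi:\Cn\to\C$ is a non-zero linear form. Thus the factorization $f=g\circ\pi$ provided by Corollary~\ref{corComplexLtv} amounts to saying that $f$ is a univariate polynomial in a single linear combination of the coordinates, which is exactly the condition ``depending on a single variable'' as phrased in the statement.

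For the forward direction, I first apply Corollary~\ref{corComplexLtv} to obtain $f=g\circ\pi$ with $g$ a non-constant univariate polynomial and $\pi$ a non-zero linear form on $\Cn$. Then I choose any basis $\bv_2,\dots,\bv_n$ of the complex hyperplane $\ker\pi$; these vectors are $\C$-linearly independent, and by the chain rule $\dd_{\bv_i}f(\bx)=g'(\pi(\bx))\cdot\pi(\bv_i)=0$ for each $i\geq 2$, as required.

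For the converse, I extend the given independent vectors $\bv_2,\dots,\bv_n$ to a basis of $\Cn$ by adjoining some $\bv_1$, and let $L$ denote the associated $\C$-linear automorphism of $\Cn$. The pullback $f\circ L$ has vanishing partial derivatives in the directions $y_2,\dots,y_n$, hence depends only on the first coordinate $y_1$. Writing $(f\circ L)(\by)=g(y_1)$ yields $f=g\circ\pi$, where $\pi$ is a linear form on $\Cn$ (the first coordinate of $L^{-1}$) and $g$ is a univariate polynomial. If $f$ is non-constant then $g$ is non-constant and $\pi$ is surjective, so Corollary~\ref{corComplexLtv} directly gives $\Ltv(f)\neq\emptyset$. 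The only subtlety is the constant case, handled by the empty-fibre convention of Remark~\ref{rk:empty}, which already supplies Lipschitz trivial values outside the image. Beyond this routine reduction, there is no real obstacle in the argument.
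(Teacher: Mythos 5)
Your argument is correct and takes essentially the same route as the paper, which states this corollary as an immediate consequence of Corollary~\ref{corComplexLtv}: you simply carry out the $p=1$ specialization explicitly. The details you add (choosing a basis of $\ker\pi$ and the chain rule for the forward direction, the linear change of coordinates for the converse, and the constant-function edge case via Remark~\ref{rk:empty}) only spell out what the paper leaves to the reader.
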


The real case is more nuanced than the complex one and Lipschitz trivial values 
admit a richer structure as we can see below.
\begin{cor}\label{corLocGlobReal}
Let $f:\Rn\to \Rp$ be a polynomial mapping admitting a Lipschitz trivial value. 
There exists a polynomial mapping $g:\Rm \to \re^p$ and linear surjective 
projection $\pi :\Rn\to \Rm$ such that $f\ee g\circ \pi$ and
$$
\Ltv (f) = \Rp\setminus (J(g)\cup K_0(g)).
$$
Moreover, the mapping $g$ is unique up to linear changes of coordinates. 
\end{cor}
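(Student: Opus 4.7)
My plan is to deduce the corollary by one or two applications of Theorem~\ref{MainThm} combined with an intrinsic description of $\ker\pi$. First, I pick $\bc\in\Ltv(f)\cap Im(f)$ (the degenerate case $\Ltv(f)\cap Im(f)\ee \emptyset$ is handled separately by taking $g\ee f$, $\pi\ee\mathrm{id}$, and noting that Property~\ref{prop:properness-lipschit} then forces every attained value into $J(f)\cup K_0(f)$, while values outside $\clos(Im(f))$ lie both in $\Ltv(f)$ and in $\Rp\setminus(J(f)\cup K_0(f))$). Theorem~\ref{MainThm} at $\bc$ supplies $f\ee g\circ\pi$ with $g:\Rm\to\Rp$ proper at $\bc$ and $\pi:\Rn\to\Rm$ linear surjective. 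The inclusion $\Rp\setminus(J(g)\cup K_0(g))\subset \Ltv(f)$ is immediate: Property~\ref{prop:properness-lipschit} makes every such value Lipschitz trivial for $g$, and Property~\ref{prtySuspension} transports this to $f\ee g\circ\pi$.

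For the reverse inclusion, given $\bt\in\Ltv(f)$, I rule out $\bt\in K_0(g)\cup J(g)$. The exclusion $\bt\notin K_0(g)$ is direct: a critical point $\bu$ of $g$ above $\bt$ would, via $Df\ee Dg\circ D\pi$ together with the surjectivity of $D\pi$, pull back along $\pi$ to critical points of $f$ in $f^{\mm 1}(\bt)$, contradicting Proposition~\ref{prop:lip-triv-reg}. The exclusion $\bt\notin J(g)$ is the main obstacle: I would apply Theorem~\ref{MainThm} a second time at $\bt$ to obtain a factorisation $f\ee g_\bt\circ \pi_\bt$ with $g_\bt$ proper at $\bt$, and then prove $\ker\pi_\bt\ee \ker\pi$. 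Given this equality, the identity $g\circ\pi\ee g_\bt\circ\pi_\bt$ yields a linear isomorphism $A:\Rm\to\Rm$ with $\pi_\bt\ee A\pi$ and $g_\bt\ee g\circ A^{\mm 1}$, so properness of $g_\bt$ at $\bt$ transfers to $g$.

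The key step is the identification of both $\ker\pi$ and $\ker\pi_\bt$ with a subspace intrinsic to $f$. I would take this subspace to be
\[
V\ :\ee\ \bigl\{\bv\in\Rn\ :\ f(\bx\pp t\bv)\ee f(\bx)\ \text{for all}\ \bx\in\Rn,\ t\in\R\bigr\},
\]
the maximal translation-invariance subspace of $f$. The inclusion $\ker\pi\subset V$ is automatic from $f\ee g\circ\pi$. For the converse, each $\bv\in V$ generates a line $\bx\pp\R\bv$ inside every non-empty fibre of $f$, so $[\bv:0]\in F_\bc^\infty$; hence $V$ lies inside the affine cone over $F_\bc^\infty$ in $\Rn$, a set of dimension $\dim F_\bc^\infty \pp 1\ee n\mm m$. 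Since $\ker\pi$ is, by the construction in the proof of Theorem~\ref{MainThm}, an $(n\mm m)$-dimensional linear subspace already contained in $V$, dimension counting forces $V\ee\ker\pi$; the same argument applied at $\bt$ gives $V\ee\ker\pi_\bt$.

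The uniqueness statement then follows at once from the intrinsic nature of $V$: for any two factorisations $f\ee g_i\circ\pi_i$ realising the statement, the above dimension count applied at any $\bc\in\Ltv(f)\cap Im(f)$ gives $\ker\pi_1\ee\ker\pi_2\ee V$, so $m_1\ee m_2\ee n\mm\dim V$; the identity $g_1\circ\pi_1\ee g_2\circ\pi_2$ then provides the unique linear automorphism $A$ of $\Rm$ with $\pi_2\ee A\pi_1$ and $g_2\ee g_1\circ A^{\mm 1}$.
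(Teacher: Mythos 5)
Your overall route is close to the paper's: the inclusion $\Rp\setminus(J(g)\cup K_0(g))\subset\Ltv(f)$ via Properties~\ref{prtySuspension} and~\ref{prop:properness-lipschit}, the exclusion of $K_0(g)$ via Proposition~\ref{prop:lip-triv-reg}, and both the exclusion of $J(g)$ and the uniqueness via comparing a second factorisation with the first. The difference is the mechanism: the paper works at the level of $g$, using compactness of $g^{-1}(\bc)$ to force any competing factorisation $f=h\circ\sigma$ into the form $(h\circ\ell)^{-1}(\bt)=g^{-1}(\bt)\times\R^{k-m}$, whereas you work at the level of $f$, identifying $\ker\pi$ with the intrinsic translation-invariance subspace $V=\{\bv:\partial_\bv f\equiv 0\}$ and bounding $\dim V$ by $\dim F_\bc^\infty+1$. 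That is a legitimate, arguably more explicit variant, and it makes the minimal factorisation canonical in a way the paper leaves implicit.

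Two steps need repair. First, the claimed identity $\ker\pi_\bt=\ker\pi$ is false when $\bt\in\Ltv(f)\setminus Im(f)$: there $F_\bt=\emptyset$, Theorem~\ref{MainThm} returns the trivial factorisation $\pi_\bt=\mathrm{id}$, so $\ker\pi_\bt=0$, and your step ``$\bv\in V$ gives $[\bv:0]\in F_\bt^\infty$'' has no fibre to run on (e.g. $f(x,y)=x^2$ with $\bt=-1$). The conclusion $\bt\notin J(g)\cup K_0(g)$ is still true, but it needs the separate one-line observation that such a $\bt$ lies outside $\clos(Im(f))=\clos(Im(g))$, hence outside $J(g)\cup K_0(g)$; as written, your argument silently assumes $\bt\in Im(f)$. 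Second, in the uniqueness step the ``dimension count'' only gives $\ker\pi_i\subset V$, i.e. $m_i\geq m$; for a competing factorisation not produced by Theorem~\ref{MainThm} you still must rule out $m_i>m$. This does follow from your hypotheses: the assumed equality $\Ltv(f)=\Rp\setminus(J(g_i)\cup K_0(g_i))$ makes $g_i$ proper at $\bc$, and if $m_i>m$ then $g_i^{-1}(\bc)=\pi_i(F_\bc)$ contains affine subspaces of positive dimension $m_i-m$, contradicting compactness of $g_i^{-1}(\bc)$ (equivalently, properness forces $F_\bc^\infty$ into the directions of $\ker\pi_i$, giving $n-1-m\leq\dim\ker\pi_i-1$). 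With these two patches, and noting that your uniqueness argument, like the paper's, is only run when $\Ltv(f)\cap Im(f)\neq\emptyset$, the proof goes through.
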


\begin{proof}
The demonstration is similar to proof of Corollary~\ref{corComplexLtv}.
Moreover, up to a linear change of coordinates in $\Rn$,
we have $Df = Dg\oplus\bbo : \Rm\times\R^{n-m}\to\Rp$. 
Thus using Property~\ref{prtySuspension}, Proposition \ref{prop:lip-triv-reg}
and Theorem~\ref{MainThm} we get 
$$
\Ltv(f) = \Ltv(g).
$$
To show uniqueness take $g:\Rm\to\Rp$ proper at $\bc$ such that $f\ee g\circ 
\pi$ for a linear surjective mapping $\pi :\Rn\to \Rm$. For any polynomial 
mapping $h:\R^k\to\Rp$ and a linear surjective projection $\sigma:\Rn\to \R^k$
such that $f\ee h\circ\sigma$, we get, up to a linear change of coordinates 
$\ell:\re^k\to\re^k$, that
$$ 
(h\circ\ell)^{\mm 1}(\bt) \ee g^{\mm 1}(\bt)\times \R^{k\mm m}
$$
for $\bt\in \re^p$, since at least the level $\bc$ of $g$ is compact. Thus
either $m < k$ and~$h$ does not attain a proper value, or $m\ee k$ and 
$h\circ\ell \ee g$.
\end{proof}

\begin{ex}\label{exMotzkin}
Let $f: \R^3 \mapsto \R$ be the suspension at infinity of the Motzkin polynomial 
given by 
$$
f(x,y,z) =x^4y^2+x^2y^4-3x^2y^2+1.
$$

We have $J(f)\ee [1,\infty)$ and $K_{0}(f)\ee \{0,1\}$. Moreover,  
$$
\Ltv(f) \ee (\mm \infty,1) \setminus \{0\}.
$$ 
Indeed, the values of $[1,+\infty)$ are not Lipschitz trivial values of $f$, since $f$ 
does not satisfy the necessary condition (ii) of Proposition~\ref{lemfLipschitz} 
(the distance between any two levels in $(1,\pp \infty)$ is zero). 
\end{ex}
Example~\ref{exMotzkin} illustrates that the set of Lipschitz trivial values of 
a real polynomial mapping can be open and not dense in the image, whereas for 
complex mappings Lipschitz trivial values
follow a local-global principle as stated in Corollary \ref{corComplexLtv}.

For polynomial mappings Property~\ref{cor:asymptotic-set} is 
refined as the following necessary condition on the fibres.
\begin{prty}~\label{prtyConeAtInfty}
Let $f:\Kn\to\Kp$ be a polynomial mapping. There exists a $\K$-linear subspace 
$A$ of $\Kn$ of positive codimension such that
$$
\widehat{F_\bt^\infty} \ee A, \quad \text{for all } \bt\in \Ltv(f)\cap Im(f),
$$
where $\widehat{F_\bt^\infty}$ is the $\K$-cone of $\Kn$ over $F_\bt^\infty$
with vertex at the origin, and where the cone over the empty set is defined as
the null subspace.
\end{prty}

\begin{ex}\label{ex-bad}
The polynomial mapping $f:\K^3\to\K^2$, defined as $(x,y,z) \mapsto (x,xy + z)$, 
is surjective and $\cC^\infty$ trivial at each $\bc\in\K^2$.
Each level $F_\bt$ is an affine line, like the mapping in Example 
\ref{ex-simple}. Yet, the family of accumulation sets at infinity 
$(\widehat{F_\bt^\infty})_{\bt\in\K^2}$ is nowhere locally constant. Therefore
Property~\ref{prtyConeAtInfty} implies that this mapping cannot admit
any Lipschitz trivial value. 
\end{ex}

\section{Lipschitz trivial values of real mappings and their complexifications}\label{SecComplexification}
Throughout this section let $f:\Rn\to\Rp$ be a real polynomial mapping and let 
$f_\C$ be its complexification. We will show  that the set of real Lipschitz trivial values of 
$f_\C$ is either empty or equal to the set of Lipschitz trivial values of $f$, up 
to a set of positive codimension.

\begin{prop} \label{propComplexificationValuesInclusion}
If $f_\C$ admits a Lipschitz trivial value, then $\Ltv(f)$ is a semi-algebraic 
dense open subset of~$\Rp$.
More precisely, 
$$
\Ltv(f_\C)\cap \Rp \   \subset \  \Ltv(f).
$$
\end{prop}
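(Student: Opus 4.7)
The plan is to descend the factorization provided by Theorem~\ref{MainThm} for $f_\C$ to a real factorization of $f$, and then invoke Properties~\ref{prop:properness-lipschit} and~\ref{prtySuspension} on the real side. Fix $\bc\in \Ltv(f_\C)\cap\Rp$; if $\bc\notin Im(f)$, the inclusion is automatic via the remark following Definition~\ref{defLipschitztriv}, so assume $\bc\in Im(f)$. Applying Theorem~\ref{MainThm} to $f_\C$ gives a decomposition $f_\C = g_\C\circ \pi_\C$ with $\pi_\C:\Cn\to\C^m$ a linear surjective projection and $g_\C:\C^m\to\Cp$ a complex polynomial mapping proper at $\bc$.

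The key step is to arrange this decomposition to be defined over $\R$. Since $f$ and $\bc$ are real, the complex fibre $f_\C^{-1}(\bc)$ and its accumulation set at infinity are invariant under complex conjugation $\sigma$, and by Property~\ref{prtyConeAtInfty} the $\C$-cone over the latter is a linear subspace, which coincides with $\ker \pi_\C$ by the construction in the proof of Theorem~\ref{MainThm}. Hence $\ker\pi_\C$ is a $\sigma$-invariant complex subspace of $\Cn$, and therefore the complexification of the real subspace $A:=\ker\pi_\C\cap\Rn$ of real dimension $n-m$. Choose a real linear surjection $\pi:\Rn\to\R^m$ with kernel $A$; after replacing $\pi_\C$ by the complexification of $\pi$ and absorbing the resulting discrepancy into $g_\C$ via a $\C$-linear automorphism of $\C^m$, one may assume $\pi_\C$ is the complexification of $\pi$. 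The $\sigma$-equivariance of $f_\C$ combined with the surjectivity of $\pi_\C$ then forces $g_\C$ to be $\sigma$-equivariant, hence the complexification of a real polynomial $g:\R^m\to\Rp$ with $f=g\circ\pi$.

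It remains to verify that $\bc$ is a real regular value of properness of $g$. Properness of $g_\C$ at $\bc$ descends to properness of $g$ at $\bc$ by intersecting small compact complex neighbourhoods of $\bc$ with $\Rp$. By Corollary~\ref{corComplexLtv}, $\bc\notin\Bif(g_\C)=K_0(g_\C)\cup J(g_\C)$, so $\bc$ is a regular value of $g_\C$; at a real point, the complex Jacobian of $g_\C$ is the real Jacobian of $g$ viewed as a complex matrix, so its ranks over $\R$ and $\C$ agree, and $\bc$ is also a real regular value of $g$. Property~\ref{prop:properness-lipschit} then yields $\bc\in\Ltv(g)$, and Property~\ref{prtySuspension} gives $\bc\in\Ltv(g\circ\pi)=\Ltv(f)$, establishing the required inclusion.

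For the global assertion, Corollary~\ref{corComplexLtv} gives $\Ltv(f_\C)=\Cp\setminus\Bif(g_\C)$ with $\Bif(g_\C)$ empty or an algebraic hypersurface; since $g_\C$ is defined over $\R$, the set $\Bif(g_\C)$ is $\sigma$-invariant and hence cut out by real polynomials, so $\Bif(g_\C)\cap\Rp$ is a proper real algebraic subset of $\Rp$, hence nowhere dense. Therefore $\Ltv(f_\C)\cap\Rp$, and a fortiori $\Ltv(f)$, is dense in $\Rp$; semi-algebraicity of $\Ltv(f)$ then follows from Corollary~\ref{corLocGlobReal}. The main obstacle in this plan is the conjugation-symmetry descent carried out in the second paragraph: extracting a real factorization from the a priori complex one delivered by Theorem~\ref{MainThm}.
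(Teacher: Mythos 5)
Your core argument is correct, and it takes a genuinely different route from the paper's. You apply Theorem~\ref{MainThm} to $f_\C$ at a fixed real value $\bc\in\Ltv(f_\C)$ and then descend the complex factorization to $\R$ by conjugation symmetry: since $f_\C$ has real coefficients and $\bc$ is real, the fibre $F_{\C,\bc}$ and hence its asymptotic cone are invariant under conjugation, this cone equals $\ker\pi_\C$ (because $F_{\C,\bc}$ is a nonempty finite union of translates of $\ker\pi_\C$, the compact complex fibre $g_\C^{-1}(\bc)$ being finite), so the kernel is defined over $\R$ and the factorization can be rewritten as $f=g\circ\pi$ with $g,\pi$ real; you then check that $\bc$ is a regular value of properness of $g$ and conclude pointwise via Properties~\ref{prop:properness-lipschit} and~\ref{prtySuspension}. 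The paper instead stays on the real side: it uses Corollary~\ref{corComplexLtv} to see that the complex fibres over $\Ltv(f_\C)$ are finite unions of parallel $(n-p)$-dimensional affine subspaces, intersects with $\Rn$ to get the same parallel structure for the real fibres over some open $\cV\subset\Rp$ (using Zariski density of $Im(f)$), observes that $f$, as the restriction of $f_\C$, inherits conditions (i) and (ii) of Lemma~\ref{lemNotDepend}, and applies Theorem~\ref{MainThm} directly to $f$ to produce the real factorization; the endgame through Corollaries~\ref{corComplexLtv} and~\ref{corLocGlobReal} is then the same in both proofs. Your descent step is a clean substitute for the paper's re-run of the factorization argument over $\R$.

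Two points need repair, both minor. First, your opening reduction is wrong as stated: the remark after Definition~\ref{defLipschitztriv} only gives $\Rp\setminus\clos(Im(f))\subset\Ltv(f)$, so ``$\bc\notin Im(f)$ implies the inclusion is automatic'' fails for $\bc\in\clos(Im(f))\setminus Im(f)$. The fix is that your main argument never needs $\bc\in Im(f)$, only $F_{\C,\bc}\neq\emptyset$, i.e. $\bc\in Im(f_\C)$; and this holds for every $\bc\in\Ltv(f_\C)$, since $f_\C$ is dominant and, over $\Ltv(f_\C)=\Cp\setminus\Bif(g_\C)$, the finite dominant factor $g_\C$ is proper, hence surjective there. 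So drop the case split and run the descent for all $\bc\in\Ltv(f_\C)\cap\Rp$. Second, your appeal to $\Bif(g_\C)=K_0(g_\C)\cup J(g_\C)$ and to the equality $\Ltv(f_\C)=\Cp\setminus\Bif(g_\C)$ of Corollary~\ref{corComplexLtv} presupposes that your $g_\C$ from Theorem~\ref{MainThm} is a dominant mapping $\C^m\to\Cp$ with $m=p$; you should record why: $m\geq p$ because $f_\C$ (hence $g_\C$) is dominant, and $m\leq p$ because the nonempty fibre $F_{\C,\bc}$ has dimension $n-m\geq n-p$. (Alternatively, regularity of $\bc$ for $g_\C$ follows without this from Proposition~\ref{prop:lip-triv-reg} applied to $f_\C$ together with the surjectivity of $\pi_\C$, and properness at $\bc$ you already have from Theorem~\ref{MainThm}.) With these adjustments the proof is complete.
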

\begin{proof}
Denote $F_{\C,\bt} : =f_\C^{-1}(\bt)$.
If $\Ltv(f_\C)\cap Im(f_\C)$ is not empty, then $f_\C$ is dominant, and thus
the image of $f$ is Zariski dense.
Corollary~\ref{corComplexLtv} and Property~\ref{prtySuspension} imply 
the existence of a $\C$-linear
subspace $A_\C$ of $\Cn$ of dimension $n-p$ such that for each $\bt\in 
\ce^p\setminus \Bif(f_\C)$, 
the level $F_{\C,\bt}$ is a disjoint union of finitely many affine subspaces 
of $\C$-dimension $n-p$, parallel to $A_\C$. Since $f(\Rn)$ is Zariski dense, 
there exists an open set $\cV$ in $\re^p$ such that the level $F_{\bt}$ is of 
dimension $n\mm p$ for $\bt\in \cV$. As the intersection of the complex fibre 
$F_{\C,\bt}$ with $\re^n$, the fibre $F_{\bt}$ is necessarily a disjoint union of 
parallel real affine subspaces.
Therefore, $\dim f^{\mm 1}(\cV)^\infty \ee n\mm 1 \mm p$. Note that $f$, as the 
restriction of $f_{\ce}$ to $\re^n$, satisfies assumptions (i) and (ii) of 
Lemma~\ref{lemNotDepend}.
Using Theorem~\ref{MainThm} we get that $f\ee g\circ \pi$ for some real linear 
surjective projection $\pi:\re^n\to\re^p$ and real polynomial mapping 
$g:\re^p\to\re^p$. Thus $f_{\ce}\ee g_{\ce}\circ \pi_{\ce}$  and necessarily 
$g_{\ce}$ is generically finite. 
Since  Corollaries~\ref{corComplexLtv} and~\ref{corLocGlobReal} yield
$$
\Ltv(f_{\ce}) =  \ce^p\setminus \Bif (g_{\ce}), \quad \text{and}\quad 
\Ltv(f) =  \re^p\setminus \Bif (g) .
$$
We get the claim since $\Bif(g) = J(g) \cup K_0(g)$ is semi-algebraic of
positive codimension by \cite{JeKu}.
\end{proof}

\begin{prop}
The subset $B:= \Bif (g_\C) \cap \Rp\setminus \Bif (g)$ is semi-algebraic
of dimension $\leq p-1$. Either
$$
\Ltv(f)\setminus B\ee \Ltv (f_{\ce})\cap \re^p
\;\;
{\rm when} \; \dim F_\bc^\infty 
\ee n\mm 1\mm p \;\; \text{\rm for some }\bc\in \Ltv(f),
$$ 
or	 $\Ltv(f_\C)\ee \emptyset$ 	 otherwise.	 

\end{prop}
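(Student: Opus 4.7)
The plan is to split the argument according to whether the dimension condition $\dim F_\bc^\infty \ee n \mm 1 \mm p$ holds for some $\bc \in \Ltv(f)$. In the first case, I would invoke Theorem~\ref{MainThm} at such a $\bc$ to obtain a decomposition $f \ee g \circ \pi$ with $g:\Rp \to \Rp$ a real polynomial proper at $\bc$ and $\pi:\Rn \to \Rp$ a real linear surjective projection. Since $\bc$ is a regular value of $f$ by Proposition~\ref{prop:lip-triv-reg}, the mapping $g$ is proper and regular at $\bc$; the fibre $g^{\mm 1}(\bc)$ is therefore a non-empty finite set of points at which $Dg$ is invertible. The same matrix viewed over $\C$ is invertible too, so $g_\C$ is dominant and hence generically finite. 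Corollary~\ref{corLocGlobReal} then yields $\Ltv(f) \ee \Rp \setminus \Bif(g)$, while the complexified factorization $f_\C \ee g_\C \circ \pi_\C$ combined with Corollary~\ref{corComplexLtv} produces $\Ltv(f_\C) \ee \C^p \setminus \Bif(g_\C)$.

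The key technical step is to establish the inclusion $\Bif(g) \subset \Bif(g_\C) \cap \Rp$. A real critical point of $g$ is a point where the square Jacobian $Dg$ fails to be invertible, a condition independent of whether one works over $\R$ or $\C$, so $K_0(g) \subset K_0(g_\C) \cap \Rp$. Any sequence in $\Rp$ witnessing non-properness of $g$ at a value in $\Rp$ equally witnesses non-properness of $g_\C$, so $J(g) \subset J(g_\C) \cap \Rp$. Using the identity $\Bif \ee K_0 \cup J$ of~\cite{JeKu} and the very definition of $B$, one obtains $\Bif(g) \cup B \ee \Bif(g_\C) \cap \Rp$, hence
\begin{equation*}
\Ltv(f) \setminus B \ee \Rp \setminus (\Bif(g_\C) \cap \Rp) \ee \Ltv(f_\C) \cap \Rp .
\end{equation*}
The dimension bound follows because $\Bif(g_\C)$ is a complex algebraic hypersurface by~\cite{JeKu}, invariant under complex conjugation since $g$ is real, hence defined by real polynomials; its real trace is thus a real algebraic set of dimension at most $p \mm 1$, and so is the semi-algebraic subset $B$.

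For the ``otherwise'' case I would argue by contrapositive. Suppose $\Ltv(f_\C) \neq \emptyset$. Proposition~\ref{propComplexificationValuesInclusion} then furnishes a factorization $f \ee g \circ \pi$ with $g:\Rp \to \Rp$ and $g_\C$ generically finite, and asserts that $\Ltv(f) \ee \Rp \setminus \Bif(g)$ is a dense open semi-algebraic subset of $\Rp$. Since dominance of a polynomial mapping between equidimensional spaces is equivalent to that of its complexification (the induced ring map $\R[y] \to \R[x]$ is injective if and only if its flat extension $\C[y] \to \C[x]$ is), the mapping $g$ itself is dominant. Thus $Im(g)$ is a semi-algebraic subset of $\Rp$ of full dimension, hence has non-empty interior, and meets $\Ltv(f)$. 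For any $\bc$ in the intersection, $g^{\mm 1}(\bc)$ is a non-empty finite set, so $F_\bc \ee \pi^{\mm 1}(g^{\mm 1}(\bc))$ is a finite disjoint union of parallel affine $(n \mm p)$-planes whose common projective accumulation at infinity is the projectivization of $\ker \pi$, giving $\dim F_\bc^\infty \ee n \mm 1 \mm p$. This contradicts the ``otherwise'' hypothesis, forcing $\Ltv(f_\C) \ee \emptyset$.

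The most delicate point I expect is verifying the generic finiteness of $g_\C$ from purely real data in the first case --- in principle a proper real fibre could be accompanied by a positive-dimensional complex fibre --- but the invertibility of the real Jacobian at a compact regular real fibre resolves this at once, and the remainder of the argument becomes a careful bookkeeping of the sets $K_0$, $J$ and their complex counterparts.
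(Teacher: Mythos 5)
Your argument is correct and follows essentially the same route as the paper: factor $f = g\circ\pi$ via Theorem~\ref{MainThm}, express $\Ltv(f)$ and $\Ltv(f_\C)$ through Corollaries~\ref{corLocGlobReal} and~\ref{corComplexLtv}, use the identity $\Bif(g)\cup B = \Bif(g_\C)\cap\Rp$, and settle the ``otherwise'' case by showing that $\Ltv(f_\C)\neq\emptyset$ forces $\dim F_\bc^\infty = n\mm 1\mm p$ for some $\bc\in\Ltv(f)$ (which the paper phrases via Property~\ref{prtyConeAtInfty} and Proposition~\ref{propComplexificationValuesInclusion}). You simply make explicit two points the paper asserts without detail, namely the inclusion $\Bif(g)\subset\Bif(g_\C)\cap\Rp$ through $K_0$ and $J$ and the dimension bound on $B$; only note that the finiteness of $g^{-1}(\bc)$ in your last step is justified by the regularity of $\bc$ (Proposition~\ref{prop:lip-triv-reg}, giving a zero-dimensional, hence finite, real algebraic fibre) rather than by generic finiteness of $g_\C$ alone.
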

\begin{proof} In view of Proposition~\ref{propComplexificationValuesInclusion}
we only need to show the inclusion 
$$
\Ltv(f)\setminus B \subset \Ltv (f_{\ce}).
$$
If $\Ltv(f)\neq \emptyset$, then Property~\ref{prtyConeAtInfty} 
holds true and let $m:= \dim A$. From Corollary~\ref{corComplexLtv} the 
condition $m=n-p$ is necessary to have $\Ltv(f_{\ce})\neq \emptyset$. When
$m\ee n\mm p$, there exist a polynomial mapping $g:\Rp\to\Rp$ with Zariski 
dense image and a linear surjective projection $\pi:\Rn\to\Rp$ 
such that $f =g\circ\pi$. Thus $f_{\ce} = g_\C \circ \pi_\C$, and $g_\C$ is
dominant. 
We have $\Bif (g)\cup B \ee \Bif (g_\C) \cap\Rp$ and the claim follows from  
Corollaries~\ref{corComplexLtv} and~\ref{corLocGlobReal}.
\end{proof}

\section{Rational functions and Lipschitz trivial values}

\label{secRational}

It is natural to ask whether we can extend the category of mappings that satisfy 
the claim of our main result. We answer in the negative, as 
Property \ref{propexampleRationalLipchitzManyVar} demonstrates that 
Theorem~\ref{MainThm} is sharp in the sense that it does not
hold for rational but non-polynomial mappings. 

Let $f:\Kn \dashrightarrow \K$ be a rational function, $n\geq 2$. 
Its indeterminacy locus $I(f)$  is the subset of $\Kn$ where denominator and 
numerator vanish simultaneously (for all representations of $f$ as a fraction).
Let $\Kbar$ be the compactification of $\K$ defined as follows
$$
\Cbar := \C \cup \{\infty\} \;\; {\rm and} \;\; 
\Rbar := \R \cup\{\pm\infty\}.
$$

\begin{prty}\label{prtyIndetLocNonempty}
Assume that the rational function $f:\Kn\dashrightarrow\K $ does not extend 
continuously through the point $\bx_0 \in \Kn$, i.e., the subset $J:\ee 
\{\lim_{\bx\to \bx_0} f(\bx) \} \subset \Kbar$ of accumulation values 
of $f$ at $\bx_0$ does not reduce to a single value in $\K$. Then
$\Ltv(f)\cap J\ee \emptyset$.
\end{prty}
\begin{proof}
If $\K =\C$ then $J = \Cbar$. When $\K= \R$, the set $J$ is closed,  
semi-algebraic and has non-empty interior.
Suppose $J\cap \Ltv(f)$ is non-empty, thus it is open. In such a case, 
there exists an open subset $\cV$ of $J\cap \Ltv(f)$ such that $f$ is Lipschitz 
trivial over $\cV$. Then any  trivializing bi-Lipschitz
homeomorphism satisfies Estimates~\eqref{eqnbilipschitz}, contradicting 
the fact that $\bx_0$ lies in the closure of any level $f^{-1}(t)$ when $t\in\cV$.
\end{proof}

\begin{cor}
A complex rational function with Lipschitz trivial values has empty 
indeterminacy locus. 
\end{cor}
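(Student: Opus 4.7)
The plan is to apply Property \ref{prtyIndetLocNonempty} by contradiction. Suppose $f:\Cn \dashrightarrow \C$ admits a Lipschitz trivial value but $I(f)\neq\emptyset$, and pick $\bx_0 \in I(f)$. The strategy is to verify that $f$ fails to extend continuously at $\bx_0$, so that Property \ref{prtyIndetLocNonempty} applies and forces $\Ltv(f) = \emptyset$.

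First I would fix a reduced representation $f = p/q$ with $p,q \in \C[x_1,\dots,x_n]$ coprime. By definition of $I(f)$, both $p(\bx_0) = 0$ and $q(\bx_0) = 0$. Since $n\geq 2$ and $p, q$ are coprime, neither of the hypersurfaces $\{p=0\}$, $\{q=0\}$ is locally contained in the other at $\bx_0$. Consequently both $\{q=0\}\setminus\{p=0\}$ and $\{p=0\}\setminus\{q=0\}$ accumulate at $\bx_0$. Along the first, $f$ tends to $\infty$; along the second, $f$ is identically $0$. Thus $\{0,\infty\}\subset J$, and the hypothesis of Property~\ref{prtyIndetLocNonempty} is fulfilled at $\bx_0$.

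Next I would invoke Property~\ref{prtyIndetLocNonempty} to get $\Ltv(f)\cap J = \emptyset$. The proof of that Property records that in the complex setting one automatically has $J = \Cbar$; should a direct verification be desired, one notes that for every $c\in\C$ the polynomial $p - c\,q$ vanishes at $\bx_0$ but is not divisible by $q$ (again by coprimality), so its zero set carries a sequence $\bx_k\to\bx_0$ along which $f\equiv c$. Combined with $\infty\in J$, this gives $J=\Cbar$, and so $\Ltv(f)\cap\Cbar = \emptyset$. Since $\Ltv(f)\subset \C\subset \Cbar$, this forces $\Ltv(f) = \emptyset$, contradicting the standing hypothesis.

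I do not anticipate a real obstacle: the argument is essentially a packaging of Property~\ref{prtyIndetLocNonempty}. The only step that merits a sentence of justification is the \emph{Picard-type} statement that at an indeterminacy point of a reduced complex fraction in dimension $n\geq 2$ the set of accumulation values covers $\Cbar$; this is handled by the one-line argument above using the pencil $p - c\,q$.
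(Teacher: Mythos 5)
Your proof is correct and is exactly the argument the paper intends: the corollary is an immediate application of Property~\ref{prtyIndetLocNonempty}, once one checks (as you do via coprimality and the pencil $p-c\,q$) that at an indeterminacy point the accumulation set $J$ contains all of $\C\supset\Ltv(f)$. The only wording to tighten is "along $\{q=0\}\setminus\{p=0\}$, $f$ tends to $\infty$": $f$ is undefined on $\{q=0\}$, so one should pass to nearby points with $q\neq 0$ to place $\infty$ in $J$ -- though your conclusion already follows from $\C\subset J$ alone.
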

On the other hand, the real setting is more flexible. Real rational 
functions may extend continuously (or even smoothly) through their 
indeterminacy locus onto $\Rn$, in such a case they are called regulous.

\begin{prop}\label{propexampleRationalLipchitzManyVar}
There exist rational functions $f:\Kn \dashrightarrow \K$ with empty 
indeterminacy locus that admit Lipschitz trivial values which are not
values of properness, and are never of the form $g\circ\pi$ with 
$g:\Km\dashrightarrow\K$ a rational function and $\pi:\Kn\to\Km$ a 
linear surjective projection with $n>m$.  
\end{prop}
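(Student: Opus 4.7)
The objective is to produce, for every $n\ge 2$, an explicit rational function $f:\Kn\dashrightarrow\K$ with empty indeterminacy locus that has a Lipschitz trivial value which is not a value of properness, and which does not factor as $g\circ\pi$ for any linear surjective $\pi:\Kn\to\Km$ with $m<n$. The idea is to take $f$ to be a projection onto one coordinate plus a bounded rational correction in the remaining ones, so that translation in the projection direction will be an isometric trivialization while the correction produces genuine non-linear dependence on all variables. A convenient realization is
\[
f(x_1,\ldots,x_n) \,:=\, x_n + \prod_{i=1}^{n-1}\frac{1}{1+x_i^2} \,=\, \frac{x_n\prod_{i=1}^{n-1}(1+x_i^2)+1}{\prod_{i=1}^{n-1}(1+x_i^2)}.
\]
The numerator evaluates to $1$ wherever the denominator vanishes, so the indeterminacy locus is empty (over $\R$ the denominator is in fact nowhere zero).

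\textbf{Lipschitz triviality and failure of properness.} For any $c\in \K$ and any neighbourhood $\cV$ of $c$, the map
\[
H:\cV\times f^{-1}(c)\to f^{-1}(\cV),\qquad H(t,\bx):=\bx+(t-c)\be_n,
\]
is a global $\K$-affine isometry of $\Kn$ which preserves the domain of definition of $f$ (a cylinder in the $x_n$-direction), and satisfies $f\circ H(t,\bx)=f(\bx)+(t-c)=t$ for every $\bx\in f^{-1}(c)$. Thus $H$ is a bi-Lipschitz trivialization of $f$ over $\cV$, so $\Ltv(f)=\K$. On the other hand, every fibre $f^{-1}(c)=\{x_n=c-\prod(1+x_i^2)^{-1}\}$ is an unbounded graph over the $(x_1,\ldots,x_{n-1})$-coordinates, hence no preimage $f^{-1}(\cV)$ is relatively compact in the domain of~$f$. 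Consequently no $c\in\K$ is a value of properness of~$f$.

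\textbf{Non-factorization through a linear projection.} Assume for contradiction that $f = g\circ\pi$ with $g:\Km\dashrightarrow\K$ rational and $\pi:\Kn\to\Km$ linear surjective with $m<n$. The chain rule forces $\nabla f$ to annihilate the nontrivial subspace $\ker\pi$ wherever $f$ is defined, so there would be a nonzero vector $v\in\Kn$ with $\partial_v f\equiv 0$. I rule this out by direct evaluation: if $v_n\ne 0$ then $\partial_v f(\bbo)=v_n\ne 0$; and if $v_n=0$ one picks $j<n$ with $v_j\ne 0$, obtaining $\partial_v f(\be_j)=-v_j/2\ne 0$. In the model case $n=2$ this reduces to the observation that the ratio $\partial_x f/\partial_y f=-2x/(1+x^2)^2$ is non-constant on $\K$. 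Hence no such $g$ and $\pi$ exist.

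\textbf{Where the difficulty lies.} Nothing above is technically delicate once the template $f=x_n+h$ is fixed; the real content is the modelling choice. The proof of Theorem~\ref{MainThm}, via Lemma~\ref{lemNotDepend}, crucially uses that a \emph{bounded polynomial} is constant. Replacing polynomial by rational destroys this rigidity — bounded rational functions need not be constant (e.g.\ $1/(1+x^2)$ on $\R$, or along any line in $\C$ avoiding its poles) — and the example exploits precisely this gap by taking $h$ to be a non-constant bounded rational function in the first $n-1$ variables.
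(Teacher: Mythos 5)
Your real example is essentially the paper's own construction: the paper takes $f(\bx,y)=y-\frac{1}{1+\sum_{i=1}^{n-1}x_i^2}$ and trivializes by straightening the graph, which is exactly the inverse of your translation map $H$; non-properness is deduced in the same way (every fibre is an unbounded graph over the first $n-1$ coordinates), and the non-factorization is reduced, as you do, to the non-existence of $\bv\neq\bbo$ with $\dd_\bv f\equiv 0$ — a fact the paper only asserts and that your evaluations of $\dd_\bv f$ at $\bbo$ and at $\be_j$ correctly verify. Over $\R$ everything you write is sound: the correction term $\prod_{i=1}^{n-1}(1+x_i^2)^{-1}$ has globally bounded first derivatives, so $f$ is globally Lipschitz and $H^{-1}(\by)=\bigl(f(\by),\,\by-(f(\by)-c)\be_n\bigr)$ is Lipschitz, making $H$ genuinely bi-Lipschitz.

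The one genuine flaw is your claim that the same formula yields $\Ltv(f)=\K$ also for $\K=\C$. Over $\C$ the denominator vanishes (e.g.\ at $x_1=\pm i$), and near those poles $f$ is \emph{not} Lipschitz on $f^{-1}(\cV)$ for any value $c$ and any neighbourhood $\cV$: with $x_1=i+1/k$ and $x_1'=i+1/(k+\alpha)$ (other $x_j=0$), one has $\frac{1}{1+x_1^2}-\frac{1}{1+x_1'^2}\to i\alpha/2$, so choosing the last coordinates to place the two points on the fibres over $c+i\alpha/4$ and $c-i\alpha/4$ makes them arbitrarily close in $\Cn$ while their $f$-values stay at distance $\alpha/2$. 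Hence $H^{-1}$ is not Lipschitz, and by the argument of Proposition~\ref{lemfLipschitz}\,(i) no trivialization over $\cV$ can be bi-Lipschitz, so the complex version of your $f$ has no Lipschitz trivial values at all; calling $H$ an ``isometry'' is accurate only slice-wise, and bi-Lipschitzness on the product is precisely where the real/complex difference enters. This does not invalidate the proposal as a proof of the proposition in the sense the paper proves it — the paper's own example is likewise only real, in line with the preceding discussion that the real (regulous) setting is the flexible one — but you should restrict the assertion $\Ltv(f)=\K$ to $\K=\R$ rather than claim it for both fields.
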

\begin{proof}
Let $h:\re^{n-1}\to\re$ be the non-constant 
function $\bx \mapsto h(\bx) :=1+\sum_{i=1}^{n-1} x_i^2$
and consider the rational $\cC^\infty$ function $f:\Rn \to \R$ defined
as 
$$
f(\bx,y)\ee y\mm \frac{1}{h(\bx)}.
$$
We have $I(f)\ee \emptyset$ and $f$ has no critical point. 
Observe that the partial derivatives of $f$ 
are uniformly bounded over $\Rn$, thus $f$ is a Lipschitz function over $\Rn$.

For $c\in \R$ define the following mapping  
$$
G: \Rn \to \R\times f^{\mm 1}(c), \;\; (\bx,y) \mapsto \left( f(\bx,y), 
\left(\bx, c + \frac{1}{h(\bx)} \right)\right).
$$
It is a Lipschitz homeomorphism with inverse 
$$
G^{\mm 1}\left(t, \left(\bx,c + \frac{1}{h(\bx)}\right)\right)
\ee \left(\bx,t + \frac{1}{h(\bx)}\right).
$$
The inverse $G^{\mm 1}$ is also Lipschitz, thus each value $c$ of $\R$ is 
Lipschitz trivial for $f$. Since any level of $f$ is a graph over 
$\Rn$, the function $f$ cannot be proper at $c$.  Last, there exists no vector
$\bv$ of $\Rn\setminus \bbo$ such that $\dd_\bv f \equiv 0$.
\end{proof}

\begin{rk}
The function $f$ defined in the proof of  
Proposition~\ref{propexampleRationalLipchitzManyVar}
is regulous \cite{KoNo,Kuc,4guys}. 
\end{rk}



\begin{thebibliography}{referen}

\bibitem{FGS} A. Fernandes, V. Grandjean \& C.H. Soares,
\em A note on the local Lipschitz triviality of values of 
complex polynomial functions,
\rm Math. Z., {\bf 296} 1-2 (2020) 861--874.
https://doi.org/10.1007/s00209-020-02474-z

\bibitem{4guys} G.Fichou, J. Huisman, F. Mangolte, \& J.-P. Monnier,
\em Fonctions r\'{e}gulues,
\rm J. Reine Angew. Math., {\bf 718} (2016) 103--151.
https://doi.org/10.1515/crelle-2014-0034

\bibitem{Har} R. Hardt,
\em Semi-algebraic local-triviality in semi-algebraic mappings,
\rm Amer. J. Math., {\bf 102} 2 (1980) 291--302. 
https://doi.org/10.2307/2374240

\bibitem{Je99} Z. Jelonek,
\em Testing sets for properness of polynomial mappings,
\rm Math. Ann., {\bf 315} 1 (1999) 1--35.
https://doi.org/10.1007/s002080050316

\bibitem{JeKu} Z. Jelonek \& K. Kurdyka,
\em Quantitative generalized {B}ertini-{S}ard theorem for smooth
affine varieties,
\rm Discrete Comput. Geom., {\bf 34} 4 (2005) 659--678.
https://doi.org/10.1007/s00454-005-1203-1

\bibitem{KOS} K. Kurdyka, P. Orro, \& S. Simon,
\em Semialgebraic {S}ard theorem for generalized critical values,
\rm J. Differential Geom., {\bf 56} 1 (2000) 67--92.
https://doi.org/10.4310/jdg/1090347525

\bibitem{Kuc} W. Kucharz,
\em Rational maps in real algebraic geometry,
\rm Adv. Geom., {\bf 9}, 4  (2009) 517--539.
https://doi.org/10.1515/ADVGEOM.2009.024

\bibitem{KoNo} J.Koll\'{a}r \& K. Nowak, 
\em Continuous rational functions on real and {$p$}-adic
 varieties,
\rm Math. Z., {\bf 279} 1-2 (2015) 85--97. 
https://doi.org/10.1007/s00209-014-1358-7

\bibitem{Rab} P.J. Rabier,
\em Ehresmann fibrations and {P}alais-{S}male conditions for
morphisms of {F}insler manifolds,
\rm Ann. of Math., {\bf 146} 3 (1997) 647--691. 
https://doi.org/10.2307/2952457

\bibitem{Tho} R. Thom,
\em Ensembles et morphismes stratifi\'{e}s,
\rm Bull. Amer. Math. Soc., {\bf 75} (1969) 240--284.

\bibitem{Ver} J.L. Verdier,
\em Stratifications de {W}hitney et th\'eor\`eme de {B}ertini-{S}ard,
\rm Invent. Math., {\bf 36} (1976) 295--312. 
https://doi.org/10.1007/BF01390015
\end{thebibliography}

\end{document}